\theoremstyle{plain}
\newtheorem{thm}{Theorem}
\newtheorem{lemma}{Lemma}
\newtheorem{corollary}{Corollary}
\theoremstyle{definition}
\newtheorem{definition}{Definition}
\newtheorem{example}{Example}
\newtheorem{assumption}{Assumption}
\theoremstyle{remark}
\newtheorem{remark}{Remark}
\newcommand{\T}{\mathbb{T}}
\newcommand{\R}{\mathbb{R}}
\newcommand{\Sub}{\operatorname{\rm Sub}}
\newcommand{\E}{\mathbf{E}}
\renewcommand{\d}{\,\mathrm{d}}
\renewcommand{\P}{\mathbf{P}}
\begin{document}
\begin{frontmatter}

\title{Averaged deviations of Orlicz processes and majorizing measures}
%
%
%
%

\author{\inits{R.}\fnm{Rostyslav}\snm{Yamnenko}}\email{yamnenko@univ.kiev.ua}
\address{Taras Shevchenko National University of Kyiv, Ukraine}

\markboth{R. Yamnenko}{Averaged deviations of Orlicz processes and
majorizing measures}



\begin{abstract}
This paper is devoted to investigation of supremum of averaged deviations
$| X(t) - f(t) - \int_{\T} (X(u) - f(u)) \d\mu(u) / \mu(\T)|$ of a
stochastic process
from Orlicz space of random variables using the method of majorizing measures.
An estimate of distribution of supremum of deviations $|X(t) - f(t)|$
is derived. A special case of the $L_q$ space is considered.
As an example, the obtained results are applied to stochastic processes
from the $L_2$ space with known covariance functions.
\end{abstract}

\begin{keyword}
Orlicz space\sep
Orlicz process\sep
supremum distribution\sep
method of majorizing measures\sep
Ornstein--Uhlenbeck process
\MSC[2010] 60G07
\end{keyword}

\received{7 September 2016}
%
\revised{28 October 2016}
%
\accepted{28 October 2016}
\publishedonline{11 November 2016}

\end{frontmatter}

\section{Introduction}

This paper is devoted to investigation of the supremum of averaged
deviations of stochastic processes
from Orlicz spaces of random variables using the method of majorizing
measures. In particular, we estimate functionals of the following type:
\[
\sup_{t\in\T}\left| X(t) - f(t) - \frac{1}{\mu(\T)} \int_{\T} \bigl(X(u) -
f(u)\bigr) \d\mu(u) \right|
\]
where $(\T, \mathcal{B}, \mu)$ is a measurable space with finite
measure $\mu(\T)<\infty$, and $f(u)$ is some function.
In particular, using the obtained with probability one estimates for
such a functional, we are able to estimate the distribution of
$\sup_{t\in\T} |X(t) - f(t)|$.
A~special attention is devoted to the Orlicz spaces such as the $L_q$ spaces.

The method of majorizing measures is used in the theory of Gaussian
stochastic processes to determine
conditions of boundedness and sample path continuity with probability
one of these processes. Application of the method
gives a possibility to obtain estimates for the distributions of
stochastic processes. Papers by Fernique
\cite{fernique_1971,fernique_1975} are among the first in this direction.
In some cases, the method of majorizing measures turns out to be more
effective than the entropy method exploited by Dudley \cite
{dudley_1973}, Fernique \cite{fernique_1975}, Nanopoulos and Nobelis
\cite{nanopoulos_nobelis1978}, and K\^ono \cite{kono1980}.
For example, Talagrand \cite{talagrand_1987} proposed necessary and
sufficient conditions in terms of majorizing measures for the sample
path continuity with probability one of Gaussian stochastic processes.
Such conditions in entropy terms
were found by Fernique \cite{fernique_1975} for stationary Gaussian
processes only. More details on the method of majorizing measures
can be found in papers by Talagrand \cite{talagrand_1987,talagrand_1996}, Ledoux and Talagrand \cite{ledoux_talagrand_1991}, and
Ledoux \cite{ledoux_1996}.

Particular cases of problems considered in this paper were investigated
by Koza\-chenko and Mok\-lya\-chuk \cite{kozachenko_moklyachuk2003},
Kozachenko and Ryazan\-tseva \cite{kozachenko_ryazantseva1992},
Koza\-chenko, Vasylyk, and Yamnenko \cite{kvy_ROSE}, Kozachenko and
Sergiienko \cite{kozachenko_sergiienko2014}, Yamnenko \cite{yamnenko_LMJ_2015}.
Kozachen\-ko and Ryazantseva \cite{kozachenko_ryazantseva1992} obtained
conditions of boundedness and sample path continuity
with probability one of stochastic processes from the Orlicz space of
random variables generated by exponential Orlicz functions.
Kozachenko, Vasylyk, and Yamnenko \cite{kvy_ROSE} estimated the
probability that the supremum of a stochastic process
from Orlicz spaces of exponential type exceeds some function.
Kozachenko and Moklyachuk \cite{kozachenko_moklyachuk2003} obtained
conditions of boundedness
and estimates of the distribution of the supremum of stochastic
processes from the Orlicz space of random variables.
Kozachenko and Sergiienko \cite{kozachenko_sergiienko2014} constructed
tests for a hypothesis concerning the form of the covariance function
of a Gaussian stochastic process.
Yamnenko \cite{yamnenko_LMJ_2015} obtained an estimate for
distributions of norms of deviations of a stochastic process
from the Orlicz space of exponential type from a given function in
$L_p(\T)$.

As a simple example, we apply the obtained results to a stochastic
process with the same covariance function as that of the
Ornstein--Uhlenbeck process but with trajectories from the $L_2$ space.
In \cite{yamnenko_ou_2006}, a similar problem is considered for a
generalized Ornstein--Uhlenbeck process from the Orlicz space of
exponential type $\Sub_{\varphi}(\varOmega)$.

\section{Orlicz spaces. Basic definitions}

\begin{definition}[Orlicz $N$-function \cite{bk}]
A continuous even convex function $\{U(x), x \in\R\}$ is said to be an
{Orlicz $N$-function} if it is strictly increasing for $x>0$,
$U(0)=0$, and
\[
{\frac{U(x)}{x}}\to0 \quad\mbox{ as } x\to0 \quad
\mbox{and}\quad{\frac{U(x)}{x}}\to\infty\quad\mbox{ as }
x\to\infty.
\]
\end{definition}

Any Orlicz $N$-function $U$ has the following properties \cite{kr}:
\begin{enumerate}
\item[a)] $U(\alpha x) \le\alpha U(x)$ for any $0\le\alpha\le1$;
\item[b)] $U(x) + U(y) \le U(|x| + |y|)$;
\item[c)] the function $U(x)/x$ increases for $x > 0$.\vadjust{\eject}
\end{enumerate}

\begin{example}
The following functions are $N$-functions:
\begin{itemize}
\item$U(x) = \alpha|x|^\beta, \; \alpha>0,\; \beta>1$;
\item$U(x) = \exp\{|x| \} - |x| -1$;
\item$U(x) = \exp\{\alpha|x|^\beta\} -1, \; \alpha>0,\; \beta>1$;
\item$U(x) =\left\{%
\begin{array}{ll}
( {e \alpha/ 2} )^{2 /
\alpha} x^2, & |x|\le
({2 / \alpha})^{1 / \alpha},\\
\exp\{|x|^\alpha\}, & |x| >
({2/\alpha})^{1/\alpha},\quad 0< \alpha<1.\\
\end{array}
\right.$
\end{itemize}
\end{example}

\begin{definition}[Class $\varDelta_2$ \cite{kr}]
\label{class_delta_2}
An $N$-function $U(x)$ belongs to the class $\varDelta_2$ if there exist a
constant $x_0\ge0$ and
an increasing function $K(x)>0$, $x\ge0$, such that
\[
U(zx) \le K(z) U(x)\quad\textrm{for}\ z\ge1,\ x\ge x_0.
\]
\end{definition}

\begin{example}
The following functions are from the class $\varDelta_2$:
\begin{itemize}
\item$U(x) = |x|^\alpha/ \alpha, \; \alpha>1$;
\item$U(x) = |x|^{\alpha}(|\ln|x|| + 1), \; \alpha>1$;
\item$U(x) = (1 + |x|)(|\ln(1+|x|) + 1) - |x|$.
\end{itemize}
The function $U(x) = \exp\{|x|\} - |x| - 1$ increases faster than any
power function, and therefore it does not belong to the class $\varDelta_2$.
\end{example}



\begin{definition}[Class $E$ \cite{bk,kozachenko1985}]
\label{def:class_E}
An $N$-function $U(x)$ belongs to the class $E$ if there exist
constants $z_0\ge0$, $B>0$, and $D>0$ such that, for all
$x\ge z_0$ and $y\ge z_0$,
\[
U(x)U(y) \le B U(D xy).
\]
\end{definition}

\begin{example}
\begin{enumerate}
\item[(i)] Let $U(x) = c|x|^p$, $c>0$, $p>1$. Then $U$
belongs to the class $E$ with constants $B=c$, $z_0 = 0$, and $D=1$.
\item[(ii)] The function $U(x) = |x|^\beta/(\log(c+|x|))^\alpha$
belongs to the class $E$ if $c$
is a number large enough such that the function $U(x)$ be convex. In
this case, $z_0 = \max\{0, \exp\{2^{-1/{\alpha}}\} - c\}$.
\end{enumerate}
\end{example}


We will further also consider functions that belong to the intersection
of the classes $\varDelta_2$ and $E$.

\begin{example}
Let $U(x) = |x|^q$, $q>1$. Then $U\in\varDelta_2 \cap E$.
\end{example}

\begin{example}
There exist functions from the class $E$ that do not belong to the
class $\varDelta_2$, for example,
$U(x) = \exp\{|x|^{\alpha}\}-1$, $\alpha> 1$, and $U(x) = \exp\{\phi
(x)\} - 1$, where $\phi(x)$ is an $N$-function.
\end{example}

Let $(\T, \mathcal{B}, \mu)$ be a measurable space with finite measure
$\mu(\T)$.

\begin{definition}[Orlicz space]
The space
$L_U^{\mu}(\T)$ of measurable functions on $(\T, \mathcal{B}, \mu)$
such that, for every
$f\in L_U^{\mu}(\T)$, there exists a constant $r_f$ such that
\[
\int_{\T} U\biggl(\frac{f(t)}{r_f}\biggr)\d\mu(t) < \infty
\]
is called the Orlicz space.
\end{definition}

The space $L_U^{\mu}(\T)$ is a Banach space with the Luxembourg norm
\begin{equation}
\label{Luxemburg_norm}
\|f\|_{U,\mu}^{\T} = \inf\biggl\{r>0\colon\int_T U\biggl(\frac
{f(t)}{r}\biggr)\d\mu(t) \le1 \biggr\}.
\end{equation}

We will also consider the Orlicz space $L_U^{\mu\times\mu}(\T\times\T)$
of measurable functions on $(\T\times\T, \mathcal{B}\times\mathcal{B},
\mu\times\mu)$, where $\mathcal{B}\times\mathcal{B}$ is the
tensor-product sigma-algebra on the product space, and
$\mu\times\mu$ is the product measure on the measurable space $(\T\times
\T, \mathcal{B}\times\mathcal{B})$, that is, for every
$f\in L_U^{\mu\times\mu}(\T\times\T)$, there exists a constant $r_f$
such that
\[
\int_{\T}\int_{\T} U\biggl(\frac{f(t,s)}{r_f}\biggr) \d\bigl(\mu(t)\times\mu
(s)\bigr) < \infty.
\]

\begin{definition}[Young--Fenchel transform]
Let $\{U(x), x\in\R\}$ be an Orlicz $N$-function. The function $\{
U^*(x), x\in\R\}$ for which
\[
U^*(x) = \sup_{y\in\R}\bigl(xy - U(y)\bigr)
\]
is called the Young--Fenchel transform of the function $U$.
\end{definition}
\begin{remark}
If $x>0$, then
\[
U^*(x) = \sup_{y>0}\bigl(xy - U(y)\bigr), \qquad U^*(-x) = U^*(x).
\]
\end{remark}

\begin{thm}[Fenchel--Moreau \cite{bk}]
Suppose that U is an N-function. Then
\[
(U^*)^* = U.
\]
\end{thm}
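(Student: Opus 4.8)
The plan is to establish the two inequalities $(U^*)^*(x) \le U(x)$ and $(U^*)^*(x) \ge U(x)$ for every $x \in \R$ separately; combining them yields the claimed identity. The essential ingredients are the Young inequality (which follows directly from the definition of the Young–Fenchel transform) and the convexity of $U$.

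First I would record the \emph{Young inequality}. By the definition of $U^*$, for every pair $x,y \in \R$ we have $U^*(y) = \sup_{z\in\R}\bigl(zy - U(z)\bigr) \ge xy - U(x)$, and therefore
\[
xy \le U(x) + U^*(y), \qquad x,y \in \R .
\]
The easy direction is then immediate: rewriting this as $xy - U^*(y) \le U(x)$ and taking the supremum over $y \in \R$, the definition of the biconjugate gives
\[
(U^*)^*(x) = \sup_{y\in\R}\bigl(xy - U^*(y)\bigr) \le U(x), \qquad x \in \R .
\]

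For the reverse inequality, fix $x_0 \in \R$ and exploit convexity. Since $U$ is a finite convex function defined on all of $\R$, every point is an interior point of its effective domain, so $U$ admits a supporting line at $x_0$: there exists a slope $y_0 \in \R$ (a subgradient) such that
\[
U(x) \ge U(x_0) + y_0 (x - x_0), \qquad x \in \R .
\]
Rearranging this bound yields $x y_0 - U(x) \le x_0 y_0 - U(x_0)$ for all $x$, and taking the supremum over $x$ on the left-hand side gives $U^*(y_0) \le x_0 y_0 - U(x_0)$. Consequently,
\[
(U^*)^*(x_0) = \sup_{y\in\R}\bigl(x_0 y - U^*(y)\bigr) \ge x_0 y_0 - U^*(y_0) \ge U(x_0) .
\]
Together with the easy direction this forces $(U^*)^*(x_0) = U(x_0)$ for every $x_0 \in \R$.

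The main obstacle is the reverse inequality, and specifically the existence of the supporting line, since this is the only step that genuinely uses more than the definitions. For a general function the biconjugate recovers only the convex lower-semicontinuous envelope, so one must verify that $U$ actually equals its own envelope. For an $N$-function this is guaranteed: $U$ is convex and continuous on $\R$, and the growth condition $U(x)/x \to \infty$ as $|x|\to\infty$ ensures that $U^*$ is finite and that the supremum defining $U^*(y_0)$ is attained, so the subgradient $y_0$ exists at every point. No blow-up of $U^*$ can occur, and the supporting-line argument applies uniformly in $x_0$.
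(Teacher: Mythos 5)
The paper itself contains no proof of this theorem: it is quoted as a classical result from the Buldygin--Kozachenko book \cite{bk}, so there is no internal argument to compare yours against. Your proof is the standard convex-analysis proof of Fenchel--Moreau in the one-dimensional, everywhere-finite case, and it is correct: Young's inequality $xy \le U(x) + U^*(y)$ gives $(U^*)^*(x) \le U(x)$, and the existence of a supporting line at each $x_0$ --- valid because $U$ is convex and finite on all of $\R$, so every point is interior to the effective domain --- gives $(U^*)^*(x_0) \ge x_0 y_0 - U^*(y_0) \ge U(x_0)$. One remark on your closing paragraph: the justification you actually need is the one you already gave (finiteness plus convexity on $\R$ yields a subgradient at every point), whereas the added appeal to the growth condition $U(x)/x \to \infty$ runs the logic backwards --- attainment of the supremum defining $U^*(y_0)$ at some point $x$ shows only that $y_0$ is a subgradient of $U$ at that particular $x$, not that every prescribed $x_0$ admits a subgradient. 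The growth condition is what makes $U^*$ finite on all of $\R$ (and, incidentally, an $N$-function itself), but it is not needed for the supporting-line step; indeed $U^*(y_0) \le x_0 y_0 - U(x_0) < \infty$ holds automatically once the supporting line is in hand. So the proof stands, with that one piece of reasoning best deleted rather than relied upon.
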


Let us give two examples of convex conjugate functions.
\begin{example}
\begin{enumerate}
\item[(i)] Suppose that $p>1$ and $q$ is the conjugate exponent of $p$:
$1/p + 1/q = 1$. Let $U(x) = |x|^p / p$. Then
$U^*(x) = |x^q| / q$.
\item[(ii)] Assume that $U(x) = e^{|x|} - |x| - 1$, $x\in\R$. Then
\[
U^*(x) = \bigl(1 + |x|\bigr)\bigl(\xch{\ln(1+|x|)}{|\ln(1+|x|)} + 1\bigr) - |x|, \quad x\in\R.
\]
\end{enumerate}
\end{example}

Let $U$ be an $N$-function, and $f$ be a function from the space
$L_U^{\mu}(\T)$. Consider
\[
s(f; U) = \int_\T U\bigl(f(t)\bigr)\d\mu(t) < \infty.
\]
In the space $L_U^{\mu}(\T)$, we can introduce a different norm
equivalent to the Luxembourg norm. This is the Orlicz norm
\begin{equation}
\label{Orlicz_norm}
\|f\|_{(U),\mu}^{\T} = \sup_{v\colon s(v; U^*)\le1}\left|\int_{\T}f(t)
\d\mu(t) \right|,
\end{equation}
where $U^*$ is the Young--Fenchel transform of the function $U$.

\begin{lemma}[H\"{o}lder inequality \cite{kr}]
\label{lemma:Holder}
Let $\{f(t), t\in\T\}$ be a function from the space $L^{\mu}_{U}(\T)$
endowed with the Luxembourg norm \eqref{Luxemburg_norm}, and
let $\{\varphi(t), t\in\T\}$ be a function from the space $L^{\mu
}_{(U^*)}(\T)$ endowed with the Orlicz norm \eqref{Orlicz_norm}.
Then
\begin{equation}
\int_{\T} \big|f(t)\varphi(t)\big|\d\mu(t) \le
\|f\|^{\T}_{U,\mu}\times\|\varphi\|^{\T}_{(U^*),\mu}.
\end{equation}
\end{lemma}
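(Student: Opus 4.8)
The plan is to normalize $f$ by its Luxembourg norm and then exhibit an explicit admissible test function in the variational definition of the Orlicz norm of $\varphi$. First I would dispose of the degenerate case: if $\|f\|^{\T}_{U,\mu}=0$, then $f=0$ $\mu$-a.e.\ (otherwise $\int_{\T}U(f/r)\d\mu$ would blow up as $r\downarrow0$ on the set where $f\neq0$, by Fatou's lemma), so both sides vanish and there is nothing to prove. Assume henceforth $\|f\|^{\T}_{U,\mu}>0$ and put $g(t)=f(t)/\|f\|^{\T}_{U,\mu}$.

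The technical heart, which I expect to be the main obstacle, is to show that the infimum defining the Luxembourg norm is attained at the level of the integral, namely
\[
s(g;U)=\int_{\T}U\!\left(\frac{f(t)}{\|f\|^{\T}_{U,\mu}}\right)\d\mu(t)\le1.
\]
To obtain this I would choose a sequence $r_n\downarrow\|f\|^{\T}_{U,\mu}$ with $\int_{\T}U(f/r_n)\d\mu\le1$, which exists by the definition of the infimum. Since $U$ is even and increasing on $[0,\infty)$, the integrands $U(f(t)/r_n)=U(|f(t)|/r_n)$ increase pointwise to $U(g(t))$ as $r_n\downarrow\|f\|^{\T}_{U,\mu}$, so the monotone convergence theorem yields $s(g;U)=\lim_n\int_{\T}U(f/r_n)\d\mu\le1$.

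Next I would rewrite the Orlicz norm of $\varphi$. Because $\varphi\in L^{\mu}_{(U^*)}(\T)$, its Orlicz norm \eqref{Orlicz_norm} is built from the conjugate of $U^*$; invoking the Fenchel--Moreau theorem, $(U^*)^*=U$, so
\[
\|\varphi\|^{\T}_{(U^*),\mu}=\sup_{v\colon s(v;U)\le1}\left|\int_{\T}\varphi(t)v(t)\d\mu(t)\right|.
\]
The final step is to construct a single admissible $v$ that simultaneously realizes $\int_{\T}|g\varphi|\d\mu$ and meets the constraint. I would take $v(t)=|g(t)|\operatorname{sgn}(\varphi(t))$. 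Since $U$ is even, $s(v;U)=\int_{\T}U(|g(t)|)\d\mu(t)=s(g;U)\le1$, so $v$ is admissible, while $\int_{\T}\varphi(t)v(t)\d\mu(t)=\int_{\T}|g(t)\varphi(t)|\d\mu(t)$. Hence
\[
\int_{\T}|g\varphi|\d\mu=\int_{\T}\varphi v\,\d\mu\le\left|\int_{\T}\varphi v\,\d\mu\right|\le\|\varphi\|^{\T}_{(U^*),\mu}.
\]
Multiplying through by $\|f\|^{\T}_{U,\mu}$ and using $f=\|f\|^{\T}_{U,\mu}\,g$ gives $\int_{\T}|f\varphi|\d\mu\le\|f\|^{\T}_{U,\mu}\,\|\varphi\|^{\T}_{(U^*),\mu}$, which is the desired inequality.
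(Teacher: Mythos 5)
You cannot be checked against the paper's own proof here, because the paper gives none: Lemma~\ref{lemma:Holder} is quoted from Krasnoselskii and Rutitskii \cite{kr} without argument. Judged on its own, your proof is correct, and it is in essence the classical one. The two ingredients you isolate are exactly what is needed. First, that the Luxembourg constraint holds at the norm itself, i.e.\ $s(g;U)=\int_{\T}U\bigl(f(t)/\|f\|^{\T}_{U,\mu}\bigr)\d\mu(t)\le1$; your monotone convergence argument along $r_n\downarrow\|f\|^{\T}_{U,\mu}$ establishes this (note also that the infimum in \eqref{Luxemburg_norm} is taken over a nonempty set: $f\in L^{\mu}_{U}(\T)$ together with property a) of $N$-functions gives $\int_{\T}U(f/(kr_f))\d\mu\le k^{-1}\int_{\T}U(f/r_f)\d\mu\le1$ for $k$ large, so the sequence $r_n$ indeed exists). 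Second, the identification, via the Fenchel--Moreau theorem $(U^*)^*=U$ stated in the paper, of the Orlicz norm on $L^{\mu}_{(U^*)}(\T)$ as a supremum over test functions $v$ with $s(v;U)\le1$, against which your normalized, sign-adjusted choice $v=|g|\operatorname{sgn}(\varphi)$ is admissible. Two small remarks, neither affecting correctness: the paper's display \eqref{Orlicz_norm} omits the factor $v(t)$ in the integrand (an evident typo), and your proof tacitly and correctly works with the intended definition $\sup_{v\colon s(v;U^*)\le1}\bigl|\int_{\T}f(t)v(t)\d\mu(t)\bigr|$; and at points where $\varphi(t)=0$ one has $v(t)=0$, so strictly $s(v;U)\le s(g;U)$ rather than equality, but the inequality is all your argument uses.
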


\begin{lemma}[Krasnoselskii and Rutitskii \cite{kr}]
\label{lemma:K&R}
Let $U(x)$ be an N-function, let $U^*(x)$ be the Young--Fenchel
transform of $U(x)$, and let $\chi_A(t)$ be the indicator function of a
set $A \subset\mathcal{B}$. Then
\begin{equation}
\|\chi_A \|^{\T}_{(U^*),\mu} = \mu(A) U^{(-1)}\biggl(\frac{1}{\mu(A)}
\biggr).
\end{equation}
\end{lemma}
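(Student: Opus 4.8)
The plan is to start from the variational definition of the Orlicz norm. By the definition \eqref{Orlicz_norm} applied with the $N$-function $U^*$ in place of $U$, and using the Fenchel--Moreau theorem $(U^*)^* = U$, the constraint $s(v; (U^*)^*)\le 1$ becomes $s(v;U)\le 1$, so that
\[
\|\chi_A\|^{\T}_{(U^*),\mu} = \sup_{v:\, \int_{\T} U(v(t))\d\mu(t)\le 1} \left| \int_{\T} \chi_A(t) v(t) \d\mu(t)\right| = \sup_{v:\, \int_{\T} U(v(t))\d\mu(t)\le 1} \left| \int_A v(t)\d\mu(t)\right|.
\]
The problem thus reduces to a constrained maximization: maximize $\int_A v\,\d\mu$ over all admissible $v$ with $\int_{\T} U(v)\,\d\mu \le 1$.

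Next I would argue that one may restrict attention to functions $v$ that are nonnegative and supported on $A$. Indeed, replacing $v$ by $|v|\chi_A$ does not decrease the objective, since $|\int_A v\,\d\mu| \le \int_A |v|\,\d\mu$, while it does not violate the constraint, because $U$ is even and hence $s(|v|\chi_A; U) = \int_A U(|v|)\,\d\mu \le \int_{\T} U(v)\,\d\mu = s(v;U)$. Therefore the supremum is unchanged, and the effective constraint reads $\int_A U(v)\,\d\mu \le 1$.

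For the upper bound I would apply Jensen's inequality to the convex function $U$ with respect to the normalized probability measure $\d\mu/\mu(A)$ on $A$:
\[
U\left(\frac{1}{\mu(A)}\int_A v\,\d\mu\right) \le \frac{1}{\mu(A)}\int_A U(v)\,\d\mu \le \frac{1}{\mu(A)}.
\]
Since $U$ is strictly increasing on $(0,\infty)$, applying $U^{(-1)}$ gives $\int_A v\,\d\mu \le \mu(A)\, U^{(-1)}(1/\mu(A))$. For the reverse inequality, I would exhibit the extremal function $v_0 = U^{(-1)}(1/\mu(A))\,\chi_A$; a direct computation gives $\int_{\T} U(v_0)\,\d\mu = \mu(A)\cdot\frac{1}{\mu(A)} = 1$, so $v_0$ is admissible, and $\int_A v_0\,\d\mu = \mu(A)\,U^{(-1)}(1/\mu(A))$, attaining the bound. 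Combining the two estimates yields the claim.

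The step I expect to be the main obstacle is the reduction: justifying rigorously that the supremum is realized by a nonnegative function concentrated on $A$, and that the Jensen bound is tight. The tightness is exactly what forces the extremal competitor to be constant on $A$ (equality in Jensen for the convex $U$), and pinning down that constant is precisely what produces the value $U^{(-1)}(1/\mu(A))$.
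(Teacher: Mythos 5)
Your proof is correct. Note, however, that the paper itself does not prove this lemma at all: it is quoted as a known result from Krasnoselskii and Rutitskii \cite{kr}, so there is no ``paper proof'' to match against; what you have produced is a self-contained derivation of the cited fact. Your route is the standard one and it is sound: Fenchel--Moreau turns the constraint $s(v;(U^*)^*)\le 1$ into $s(v;U)\le 1$; the replacement $v \mapsto |v|\chi_A$ is legitimate because $U$ is even, nonnegative, and $U(0)=0$, so the constraint can only improve while the objective does not decrease; Jensen's inequality for the convex $U$ with respect to the probability measure $\d\mu/\mu(A)$ on $A$ gives the upper bound; and the constant competitor $v_0 = U^{(-1)}(1/\mu(A))\chi_A$ saturates both the constraint and the bound, since $U$ is a continuous strictly increasing bijection of $[0,\infty)$ onto itself, so $U(U^{(-1)}(y))=y$ exactly. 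Two small points worth flagging. First, the argument implicitly assumes $\mu(A)>0$; the case $\mu(A)=0$ is degenerate (both sides vanish, interpreting $s\,U^{(-1)}(1/s)\to 0$ as $s\to 0$, which follows from $U(x)/x\to\infty$) and should be excluded or handled separately. Second, you silently corrected a typo in the paper's display \eqref{Orlicz_norm}, where the factor $v(t)$ is missing from the integrand; your reading $\sup_{v:\,s(v;U^*)\le 1}\bigl|\int_{\T} f(t)v(t)\,\d\mu(t)\bigr|$ is the intended definition, and without it the lemma (and your proof) would not even parse. Neither point is a gap in substance.
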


Let $(\varOmega, \mathcal{F}, \P)$ be a standard probability space.
\begin{definition}
The space $L_U^{\P}(\varOmega) = L_U(\varOmega)$ of random variables $\xi= \{
\xi(\omega), \omega\in\varOmega\}$
is called an Orlicz space of random variables, that is, the Orlicz
space $L_U(\varOmega)$ is the family of random variables $\xi$
for which that there exists a constant $r_\xi>0$ such that
\[
\E U\biggl(\frac{\xi}{r_\xi}\biggr) < \infty.
\]
\end{definition}

The Luxembourg norm in this space is denoted by $\|\xi\|_U$, that is,
\[
\|\xi\|_U = \inf\biggl\{r>0\colon\E U\biggr(\frac{\xi}{r}\biggl) \le1
\biggr\}.
\]

\begin{example}
Suppose that $U(x)=|x|^p, x\in\R, p\ge1$. Then $L_U(\varOmega)$ is the
space $L_p(\varOmega)$, and the Luxembourg norm $\|\xi\|_U$ coincides with
the norm $\|\xi\|_p =\break (\E|\xi|^p)^{1/p}$.
\end{example}

The following lemma follows from the Chebyshev inequality.

\begin{lemma}[Buldygin and Kozachenko \cite{bk}]
\label{lemma:Orlicz}
Let $\xi$ be a random variable from $L_U(\varOmega)$. Then, for all $x>0$,
\begin{equation}
\P\big\{|\xi| > x\big\} \le\left(U\left(\frac{x}{\|\xi\|_U}\right)\right)^{-1}.
\end{equation}
\end{lemma}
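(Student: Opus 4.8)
The plan is to combine the generalized Chebyshev (Markov) inequality with the defining property of the Luxembourg norm. First I would set $r = \|\xi\|_U$ and establish the auxiliary fact that $\E U(\xi/r) \le 1$. This is not completely immediate from the definition of the norm, since the infimum $\inf\{r>0 \colon \E U(\xi/r)\le 1\}$ need not obviously be attained. To settle it, I would take a sequence $r_n \downarrow r$ with $\E U(\xi/r_n) \le 1$ and observe that, for each fixed $\omega$, the map $r \mapsto U(\xi(\omega)/r)$ is nondecreasing as $r$ decreases, because $U$ is even and increasing in the modulus of its argument. Hence $U(\xi/r_n) \uparrow U(\xi/r)$ pointwise, and the monotone convergence theorem gives $\E U(\xi/r) = \lim_n \E U(\xi/r_n) \le 1$.

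With this in hand the remainder is the standard Chebyshev argument. Since $U$ is even and strictly increasing on $(0,\infty)$, for any $x>0$ the event $\{|\xi| > x\}$ coincides with $\{U(|\xi|/r) > U(x/r)\}$. Applying Markov's inequality to the nonnegative random variable $U(|\xi|/r)$ then yields
\[
\P\bigl\{|\xi| > x\bigr\} = \P\bigl\{U(|\xi|/r) > U(x/r)\bigr\} \le \frac{\E U(|\xi|/r)}{U(x/r)}.
\]
Because $U$ is even, $\E U(|\xi|/r) = \E U(\xi/r) \le 1$ by the previous step, so substituting $r = \|\xi\|_U$ gives exactly the asserted bound $\P\{|\xi|>x\} \le \bigl(U(x/\|\xi\|_U)\bigr)^{-1}$.

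The only delicate point is the passage to the norm value, namely verifying $\E U(\xi/\|\xi\|_U) \le 1$; once that is justified, the conclusion follows in one line. I expect this to be the main (and essentially the only) obstacle, and the monotone convergence route above handles it cleanly. Alternatively one could prove the estimate first for an arbitrary admissible $r > \|\xi\|_U$ with $\E U(\xi/r)\le 1$ and then let $r \downarrow \|\xi\|_U$, using the continuity of $U$ to pass to the limit in $U(x/r)$, but invoking monotone convergence directly is more economical.
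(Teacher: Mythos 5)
Your proof is correct and takes exactly the route the paper indicates: the paper offers no written proof of this lemma, remarking only that it ``follows from the Chebyshev inequality'' (and citing Buldygin and Kozachenko), and your argument is precisely that Chebyshev--Markov estimate applied to $U(|\xi|/\|\xi\|_U)$. Your extra step verifying $\E U\bigl(\xi/\|\xi\|_U\bigr)\le 1$ by monotone convergence correctly settles the one point the paper leaves implicit, namely that the infimum defining the Luxembourg norm is attained.
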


\begin{definition}
Let $\{X(t), t\in\T\}$ be a random process. The process $X$ belongs to
the Orlicz space $L_U(\varOmega)$
if all random variables $X(t), t\in\T$, belong to the space $L_U(\varOmega
)$ and $\sup_{t\in\T}\|X(t)\|_U<\infty$.
\end{definition}

\begin{example}
Suppose that there exists a nonnegative function $c(t), t\in\T$, such
that $\P\{|X(t)|\le c(t)\} = 1$, $t\in\T$.
Then $X$ is an $L_U(\varOmega)$-process for any Orlicz space $L_U(\varOmega)$.
\end{example}


\section{Distribution of deviations of stochastic processes from Orlicz spaces}
Let $(\T, \rho)$ be a compact separable metric space equipped with the
metric $\rho$, and let $\mathcal{B}$ be the Borel $\sigma$-algebra on
$(\T, \rho)$.


Consider a separable stochastic \xch{process}{process process} $X = \{X(t), t\in\T\}$
from the Orlicz space $L_U(\varOmega)$, that is, $X(t)\in L_U(\varOmega)$,
$t\in\T$, is continuous in the norm $\|\cdot\|_U$.

\begin{assumption}\label{assumption_sigma}
Consider such a function $\sigma= \{\sigma(h), h > 0\}$, $t\in\T$,
such that
\begin{itemize}
\item$\sigma(h)\ge0$,
\item$\sigma(h)$ increases in $h>0$,
\item$\sigma(h)\to0$ as $h\to0$,
\item$\sigma(h)$ is continuous, and
\item$\sup_{\rho(t,s)\le h} \| X(t) - X(s) \|_U \le\sigma(h)$.
\end{itemize}
\end{assumption}

Note that at least one such function exists, for example,
\[
\sigma(h) = \sup_{\rho(t,s)\le h} \big\|X(t) - X(s)\big\|_U.
\]

Denote by $\sigma^{(-1)}(h)$ the generalized inverse to $\sigma(h)$,
that is, $\sigma^{(-1)}(h) =\break \sup\{s\colon\sigma(s) \le h\}$.
Put
\[
d(u,v) = \big\| X(u) - X(v)\big\|_U
\]
and
\[
d_f(u,v) = \big\| X(u) - X(v) - f(u) + f(v)\big\|_U
\]
and let $S$ be a set from $\mathcal{B}$ such that
\begin{equation}
\label{S}
(\mu\times\mu)\big\{(u,v)\in S\times S\colon\rho(u,v)\neq0\big\} > 0.
\end{equation}
Consider a sequence $\epsilon_k(t) > 0$ such that $\epsilon_k(t) >
\epsilon_{k+1}(t)$, $\epsilon_k(t) \to0$ as $k \to\infty$,
and $\epsilon_1(t) = \sup_{s\in S}\rho(t,s)$.
Put $C_t(u) = \{s\colon\rho(t,s)\le u\}$, $C_{t,k} = C_t(\epsilon_k(t))$,
$\mu_k(t) = \mu(C_{t,k}\cap S)$.

\begin{assumption} \label{assumption:f}
Assume that, for a continuous function $f=\{f(t), t\in\T\}$, there
exists a continuous increasing function $\delta(y) > 0$, $y>0$, such that
$\delta(y)\to0$ as $y\to0$
and the following condition is satisfied:
\[
\big|f(u)-f(v)\big| \le\delta\bigl(\big\| X(u) - X(v)\big\|_U\bigr) \le d(u,v).
\]
\end{assumption}

Throughout the paper, we will assume that, for all $B\in\mathcal{B}$,
\[
\int_B \big|X(u) - f(u)\big| \d\mu(u) < \infty.
\]

\begin{lemma}\label{lemma:1}
Suppose that $X = \{X(t), t\in\T\}$ is a separable stochastic process
from the Orlicz space $L_U(\varOmega)$ that satisfies Assumption \ref
{assumption_sigma}. Let $f$ be a function satisfying Assumption~\ref
{assumption:f}, let $\zeta(y), y>0$, be an arbitrary continuous
increasing function such that $\zeta(y)\to0$ as $y\to0$, and let
\[
\frac{X(u) - X(v) - f(u) + f(v)}{\zeta(d_f(u,v))} \in L_U^{\mu\times\mu
}(\T\times\T).
\]

Then, for any $S\in\mathcal{B}$ satisfying \eqref{S}, we have the
following inequality with probability one:
\begin{align}\label{lemma:1_main}
&\sup_{t\in S}\left| X(t) - f(t) - \frac{1}{\mu(S)} \int_S \bigl(X(u) - f(u)\bigr)
\d\mu(u) \right| \nonumber\\
&\quad{\le}\,\left\| \frac{X(u) \,{-}\, X(v) \,{-}\, f(u) \,{+}\, f(v)}{\zeta(d_f(u,v))}\right\|
_{U, \mu\times\mu}^{S\times S}\sup_{t\in S}\sum_{l=1}^{\infty} \zeta\bigl(2\sigma\bigl(\epsilon_l(t)\bigr)\bigr)
U^{(-1)}\biggl(\frac{1}{\mu^2_{l+1}(t)}\biggr).
\end{align}
\end{lemma}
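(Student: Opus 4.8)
The plan is to reduce the centered functional to a telescoping series of averages over a shrinking family of balls and then estimate each increment by H\"older's inequality on the product space. Write $Z(t)=X(t)-f(t)$ and, for each $t\in S$ and $k\ge1$, set the ball average
\[
\bar Z_k(t) = \frac{1}{\mu_k(t)} \int_{C_{t,k}\cap S} Z(u)\d\mu(u).
\]
Since $\epsilon_1(t)=\sup_{s\in S}\rho(t,s)$, we have $C_{t,1}\cap S=S$, so $\bar Z_1(t)=\frac{1}{\mu(S)}\int_S Z(u)\d\mu(u)$ is exactly the subtracted average. First I would show that $\bar Z_k(t)\to Z(t)$ as $k\to\infty$, so that
\[
Z(t) - \bar Z_1(t) = \sum_{l=1}^{\infty}\bigl(\bar Z_{l+1}(t) - \bar Z_l(t)\bigr).
\]
The convergence follows from norm-continuity: $\|\bar Z_k(t)-Z(t)\|_U \le \sup_{u\in C_{t,k}} d_f(u,t)$, which tends to $0$ because $d(u,t)\le\sigma(\rho(u,t))\to0$ and, by Assumption~\ref{assumption:f}, $|f(u)-f(t)|\le d(u,t)$.

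Next I would estimate a single increment. Averaging each term against the other ball gives the double-integral representation
\[
\bar Z_{l+1}(t) - \bar Z_l(t) = \frac{1}{\mu_{l+1}(t)\mu_l(t)} \int_{C_{t,l+1}\cap S}\int_{C_{t,l}\cap S}\bigl(Z(u)-Z(v)\bigr)\d\mu(v)\d\mu(u),
\]
so that, passing to absolute values and writing $Z(u)-Z(v)=X(u)-X(v)-f(u)+f(v)$, each increment is controlled by the integral of $|Z(u)-Z(v)|$ over $(C_{t,l+1}\cap S)\times(C_{t,l}\cap S)$. The crucial pointwise estimate is that, for $u,v$ in these balls, $d_f(u,v)\le 2\sigma(\epsilon_l(t))$; this is where Assumptions~\ref{assumption_sigma} and~\ref{assumption:f} enter, combined with the triangle bound $\rho(u,v)\le\epsilon_{l+1}(t)+\epsilon_l(t)$. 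Using monotonicity of $\zeta$, I would then factor the integrand as $\frac{|Z(u)-Z(v)|}{\zeta(d_f(u,v))}\,\zeta(d_f(u,v))$ and pull the bound $\zeta(2\sigma(\epsilon_l(t)))$ outside the integral.

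It remains to separate the two factors. Applying H\"older's inequality (Lemma~\ref{lemma:Holder}) on the product space $(S\times S,\mu\times\mu)$ to the product of $\frac{X(u)-X(v)-f(u)+f(v)}{\zeta(d_f(u,v))}$ with the indicator of $(C_{t,l+1}\cap S)\times(C_{t,l}\cap S)$, the increment is bounded by
\[
\frac{\zeta(2\sigma(\epsilon_l(t)))}{\mu_{l+1}(t)\mu_l(t)} \left\|\frac{X(u)-X(v)-f(u)+f(v)}{\zeta(d_f(u,v))}\right\|_{U,\mu\times\mu}^{S\times S} \bigl\|\chi_{(C_{t,l+1}\cap S)\times(C_{t,l}\cap S)}\bigr\|_{(U^*),\mu\times\mu}^{S\times S}.
\]
By Lemma~\ref{lemma:K&R} the indicator norm equals $\mu_{l+1}(t)\mu_l(t)\,U^{(-1)}\bigl(1/(\mu_{l+1}(t)\mu_l(t))\bigr)$, which cancels the prefactor; since the balls are nested, $\mu_{l+1}(t)\le\mu_l(t)$, and monotonicity of $U^{(-1)}$ gives $U^{(-1)}(1/(\mu_{l+1}(t)\mu_l(t)))\le U^{(-1)}(1/\mu_{l+1}^2(t))$. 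Summing over $l$ and taking the supremum over $t\in S$ then yields \eqref{lemma:1_main}.

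The main obstacle, and the step I would treat most carefully, is the justification that $\sum_l(\bar Z_{l+1}(t)-\bar Z_l(t))$ converges almost surely to $Z(t)-\bar Z_1(t)$ and that the supremum over the uncountable set $S$ is well behaved: both rely on separability of $X$ (reducing the supremum to a countable dense set) together with the norm-continuity established above. A secondary technical point is pinning down the constant in $d_f(u,v)\le2\sigma(\epsilon_l(t))$, since bounding the deterministic increment $f(u)-f(v)$ in the $\|\cdot\|_U$ norm introduces the factor $\|1\|_U=1/U^{(-1)}(1)$, which together with the geometric bound $\rho(u,v)\le2\epsilon_l(t)$ must be tracked to reach the stated constant.
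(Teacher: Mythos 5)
Your architecture is the same as the paper's: writing $Z=X-f$, telescoping the ball averages $\bar Z_l(t)$, representing each increment as a double integral over $(C_{t,l+1}\cap S)\times(C_{t,l}\cap S)$, factoring by $\zeta(d_f(u,v))$, applying H\"older's inequality (Lemma~\ref{lemma:Holder}) against the indicator of the product of balls, computing the indicator norm by Lemma~\ref{lemma:K&R}, using monotonicity of $U^{(-1)}$, and invoking separability for the supremum. However, two of your justifications would fail as written, and both are repaired in the paper by a slightly different argument.

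First, norm-continuity does not deliver the almost sure identity $Z(t)-\bar Z_1(t)=\sum_{l}(\bar Z_{l+1}(t)-\bar Z_l(t))$: the bound $\|\bar Z_k(t)-Z(t)\|_U\to0$ implies, via Lemma~\ref{lemma:Orlicz}, convergence in probability only, and separability is of no help for this pointwise-in-$t$ statement (it is used only to pass from the countable set $S\cap V$ to all of $S$ in the supremum). The paper's fix is to extract a subsequence $l_n$ along which $\bar Z_{l_n}(t)\to Z(t)$ a.s., write $Z(t)-\bar Z_1(t)=\bigl(Z(t)-\bar Z_{l_n}(t)\bigr)+\sum_{l=1}^{l_n-1}\bigl(\bar Z_{l+1}(t)-\bar Z_l(t)\bigr)$, and let $n\to\infty$; this yields only the one-sided inequality $|Z(t)-\bar Z_1(t)|\le\sum_{l\ge1}|\bar Z_{l+1}(t)-\bar Z_l(t)|$ a.s., which is all the lemma needs. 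Second, the key estimate $d_f(u,v)\le2\sigma(\epsilon_l(t))$ cannot be reached through the triangle inequality in $\rho$, as you propose: that route gives $d(u,v)\le\sigma(\rho(u,v))\le\sigma(\epsilon_l(t)+\epsilon_{l+1}(t))$, and since $\sigma$ is assumed only to be increasing, $\sigma(2h)$ is not comparable to $2\sigma(h)$. You must instead triangulate the pseudometric $d_f$ through the center $t$: $d_f(u,v)\le d_f(u,t)+d_f(t,v)\le\sigma(\epsilon_{l+1}(t))+\sigma(\epsilon_l(t))\le2\sigma(\epsilon_l(t))$, which is the paper's step (\ref{lmm1:eq5}). (Your closing remark about the factor $\|1\|_U$ hidden in the bound $d_f(\cdot,t)\le\sigma(\cdot)$ is a genuine subtlety that the paper itself glosses over in that same step; it affects both proofs equally.)
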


\begin{proof}
Let $V$ be the set of separability of the process $X$, and let $t$ be
an~arbitrary point from $S\cap V$. Put
\[
\tau_l(u) = \frac{\chi_{C_{t,l}\cap S}(u)}{\mu_l(t)},
\]
where $\chi_A(u)$ is the indicator function of $A$. Then
\begin{align}
&\left\| X(t) - f (t) - \int_S \bigl(X(u) - f(u)\bigr)\tau_l(u) \d\mu(u) \right\|
_U \nonumber\\
&\quad\le\int_S \big\| \bigl(X(t) - X(u)\bigr) - \bigl(f(t)-f(u)\bigr) \big\|_U \tau_l(u) \d\mu(u)
\nonumber\\
&\quad\le\int_S \big\| X(t) - X(u) \big\|_U \tau_l(u) \d\mu(u) + \int_S \big| f(t)-f(u)\big|
\tau_l(u) \d\mu(u) \nonumber\\
&\quad\le\sigma\bigl(\epsilon_l(t)\bigr) + \delta\bigl(\sigma(\epsilon_l(t))\bigr) \to0
\label{lmm1:eq1}
\end{align}
as $l\to\infty$. If follows from Lemma~\ref{lemma:Orlicz} and (\ref
{lmm1:eq1}) that
\[
\int_S \bigl(X(u) - f(u)\bigr)\tau_l(u) \d\mu(u) \to X(t) - f (t)
\]
in probability as $l\to\infty$. Therefore, there exists a sequence
$l_n$ such that
\[
\int_S \bigl(X(u) - f(u)\bigr)\tau_{l_n}(u) \d\mu(u) \to X(t) - f (t)
\]
with probability one as $l_n\to\infty$. It is easy to see that
\begin{align}
&\left| X(t) - f (t) - \int_S \bigl(X(u) - f(u)\bigr)\tau_l(u) \d\mu(u) \right
|\qquad\nonumber\\
&\quad= \bigg| X(t) - f (t) - \int_S \bigl(X(u) - f(u)\bigr)\tau_{l_n}(u) \d\mu(u)
\nonumber\\
&\qquad+ \sum_{l=1}^{l_n-1}\left( \int_S \bigl(X(u) - f(u)\bigr)\tau_{l+1}(u) \d\mu(u)
- \int_S \bigl(X(u) - f(u)\bigr)\tau_{l}(u) \d\mu(u)\right) \bigg| \nonumber\\
&\quad\le\bigg| X(t) - f (t) - \int_S \bigl(X(u) - f(u)\bigr)\tau_{l_n}(u) \d\mu
(u)\bigg| \nonumber\\
&\qquad+ \sum_{l=1}^{l_n-1}\bigg| \int_S \bigl(X(u) - f(u)\bigr)\tau_{l+1}(u) \d\mu(u)
- \int_S \bigl(X(u) - f(u)\bigr)\tau_{l}(u) \d\mu(u) \bigg|.
\label{lmm1:eq2}
\end{align}
It follows from (\ref{lmm1:eq2}) that the following inequality holds
with probability one:
\begin{align}
&\left| X(t) - f (t) - \frac{1}{\mu(S)}\int_S \bigl(X(u) - f(u)\bigr)\d\mu(u)
\right| \nonumber\\
&\quad\le\sum_{l=1}^{\infty}\bigg| \int_S \bigl(X(u) - f(u)\bigr)\tau_{l+1}(u) \d\mu
(u) - \int_S \bigl(X(u) - f(u)\bigr)\tau_{l}(u) \d\mu(u) \bigg|\nonumber\\
&\quad= \sum_{l=1}^{\infty}\bigg| \int_S\int_S \bigl(X(u) - X(v) - f(u) + f(v)\bigr)\tau
_{l+1}(u)\tau_{l}(v) \d\mu(u) \d\mu(v) \bigg|\nonumber\\
&\quad\le \int_{S\times S}\bigg|\frac{X(u) - X(v) - f(u) + f(v)}{\zeta
(d_f(u,v))}\bigg|\nonumber\\
&\qquad\times\Biggl(\sum_{l=1}^{\infty}\tau_{l+1}(u)\tau_{l}(v)\zeta
(d_f(u,v))\Biggr) \d(\mu(u) \times\mu(v)).
\label{lmm1:eq3}
\end{align}

From Lemma~\ref{lemma:Holder} and (\ref{lmm1:eq3}) we have the inequality
\begin{align}
&\left| X(t) - f (t) - \frac{1}{\mu(S)}\int_S \bigl(X(u) - f(u)\bigr)\d\mu(u)
\right| \nonumber\\
&\quad{\le}\, \bigg\|\frac{X(u) \,{-}\, X(v) \,{-}\, f(u) \,{+}\, f(v)}{\zeta(d_f(u,v))}\bigg\|
_{U,\mu\times\mu}^{S\times S}\left\|\sum_{l=1}^{\infty}\tau_{l+1}(u)\tau_{l}(v)\zeta
\bigl(d_f(u,v)\bigr)\right\|_{(U^*),\mu\times\mu}^{S\times S}.
\label{lmm1:eq4}
\end{align}

Also, we have
\begin{align}
\tau_{l+1}(u)\tau_l(u)\zeta\bigl(d_f(u,v)\bigr)&\le\tau_{l+1}(u)\tau_l(u)\zeta
\bigl(d_f(u,t) + d_f(u,t)\bigr) \nonumber\\
&\le\tau_{l+1}(u)\tau_l(u)\zeta\bigl(\sigma\bigl(\epsilon_l(t)\bigr) + \sigma\bigl(\epsilon
_{l+1}(t)\bigr)\bigr)\notag\\
&\le\tau_{l+1}(u)\tau_l(u)\zeta\bigl(2\sigma\bigl(\epsilon_l(t)\bigr)\bigr).
\label{lmm1:eq5}
\end{align}

From (\ref{lmm1:eq4}) and (\ref{lmm1:eq5}) we have that with
probability one the following inequality holds:
\begin{align}
&\left| X(t) - f (t) - \frac{1}{\mu(S)}\int_S \bigl(X(u) - f(u)\bigr)\d\mu(u)
\right| \nonumber\\
&\quad\le \bigg\|\frac{X(u) - X(v) - f(u) + f(v)}{\zeta(d_f(u,v))}\bigg\|
_{U,\mu\times\mu}^{S\times S} \nonumber\\
&\qquad\times\sum_{l=1}^{\infty}\zeta\bigl(2\sigma\bigl(\epsilon_l(t)\bigr)\bigr) \big\|\tau
_{l+1}(u)\tau_{l}(v)\big\|_{(U^*),\mu\times\mu}^{S\times S}.
\label{lmm1:eq6}
\end{align}

It follows from Lemma~\ref{lemma:K&R} that
\begin{align}
\big\|\tau_{l+1}(u)\tau_{l}(v)\big\|_{(U^*),\mu\times\mu}^{S\times S}
&=\frac{1}{\mu_l(t)\mu_{l+1}(t)}\left\|\chi_{C_{t,l}\cap S}(u)\chi
_{C_{t,l+1}\cap S}(v)\right\|_{(U^*),\mu\times\mu}^{S\times S}
\nonumber\\
&=U^{(-1)}\left(\frac{1}{\mu_l(t)\mu_{l+1}(t)} \right) \le U^{(-1)}
\biggl(\frac{1}{\mu_{l+1}^2(t)} \biggr).
\label{lmm1:eq7}
\end{align}

Since $t\in S\cap V$ and $S\cap V$ is a countable set, (\ref{lmm1:eq7})
holds with probability one for all
$t\in S\cap V$. The process $X$ is separable, and therefore
\begin{align*}
&\sup_{t\in S} \left| X(t) - f (t) - \frac{1}{\mu(S)}\int_S \bigl(X(u) -
f(u)\bigr)\d\mu(u) \right|\\[6pt]
&\quad= \sup_{t\in S\cap V} \left| X(t) - f (t) - \frac{1}{\mu(S)}\int_S
\bigl(X(u) - f(u)\bigr)\d\mu(u) \right|
\end{align*}
with probability one.
\end{proof}
\begin{remark}
If the right side of (\ref{lemma:1_main}) is finite, then the measure
$\mu$ is called a majorizing measure on $S$ for the process $X$.
\end{remark}

\begin{corollary}\label{corollary:3.1}
Let the assumptions of Lemma~\ref{lemma:1} be satisfied. Put
\[
\zeta_1(t) = \zeta\bigl(2\sigma\bigl(\epsilon_1(t)\bigr)\bigr) = \zeta\Bigl(2\sigma
\Bigl(\sup_{s\in S}\rho(t,s)\Bigr)\Bigr)
\]
and
\[
\nu_t(u) = \mu\bigl(C_t\bigl(\sigma^{(-1)}\bigl(\zeta^{(-1)}(u)/2\bigr)
\bigr)\cap S \bigr).
\]
Then, for any $0<p<1$, we have the inequality
\begin{equation}
\sup_{t\in S}\left| X(t) - f(t) - \frac{1}{\mu(S)} \int_S \bigl(X(u) - f(u)\bigr)
\d\mu(u) \right|
\le\eta_f C_p
\label{eq:cor1}
\end{equation}
with probability one, where
\begin{equation}
\label{eta_f}
\eta_f = \left\|\frac{X(u) - X(v) - f(u) + f(v)}{\zeta(d_f(u,v))} \right
\|_{U,\mu\times\mu}^{S\times S}
\end{equation}
and
\begin{equation}
\label{C_p}
C_p = \sup_{t\in S} \frac{1}{p(1-p)}\int_0^{p\zeta_1(t)} U^{(-1)}
\bigl(\bigl(\nu_t(u)\bigr)^{-2} \bigr) \d u.
\end{equation}

\end{corollary}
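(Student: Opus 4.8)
The plan is to start from the almost-sure bound of Lemma~\ref{lemma:1} and to control the factor
$\sup_{t\in S}\sum_{l=1}^{\infty}\zeta(2\sigma(\epsilon_l(t)))U^{(-1)}(1/\mu_{l+1}^2(t))$
on its right-hand side by the integral defining $C_p$. The key observation is that the sequence $\{\epsilon_l(t)\}$ is at our disposal: it only has to be positive, strictly decreasing to $0$, with $\epsilon_1(t)=\sup_{s\in S}\rho(t,s)$. I will therefore choose it so that the weights $\zeta(2\sigma(\epsilon_l(t)))$ form a geometric sequence of ratio $p$, which turns the series into a Riemann sum for the integral in \eqref{C_p}.

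Concretely, after fixing $0<p<1$ I would set $\epsilon_l(t)=\sigma^{(-1)}(\zeta^{(-1)}(p^{l-1}\zeta_1(t))/2)$. Since $0<p<1$, the arguments $p^{l-1}\zeta_1(t)$ decrease to $0$, and because $\zeta^{(-1)}$ and $\sigma^{(-1)}$ are increasing with $\zeta^{(-1)}(0)=\sigma^{(-1)}(0)=0$, the $\epsilon_l(t)$ are positive, strictly decreasing, and tend to $0$; the case $l=1$ reproduces $\epsilon_1(t)=\sup_{s\in S}\rho(t,s)$ by the very definition of $\zeta_1(t)$. With this choice one checks directly that $\zeta(2\sigma(\epsilon_l(t)))=p^{l-1}\zeta_1(t)$ and, comparing with the definition of $\nu_t$, that $\mu_{l+1}(t)=\mu(C_t(\epsilon_{l+1}(t))\cap S)=\nu_t(p^l\zeta_1(t))$. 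Substituting these two identities reduces the series to $\sum_{l=1}^{\infty}p^{l-1}\zeta_1(t)\,U^{(-1)}((\nu_t(p^l\zeta_1(t)))^{-2})$.

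Next I would compare this series with the integral. Write $g(u)=U^{(-1)}((\nu_t(u))^{-2})$. Since $\nu_t$ is nondecreasing in $u$ (a larger $u$ gives a larger ball $C_t(\cdot)$, hence larger $\mu(\cdot\cap S)$) and $U^{(-1)}$ is increasing, the map $u\mapsto g(u)$ is nonincreasing. Partitioning $(0,p\zeta_1(t)]$ by the points $p^l\zeta_1(t)$, $l\ge1$, the subinterval $(p^{l+1}\zeta_1(t),p^l\zeta_1(t)]$ has length $p^l(1-p)\zeta_1(t)$, and on it $g$ attains its minimum at the right endpoint $p^l\zeta_1(t)$. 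Thus each piece of the integral is bounded below by $p^l(1-p)\zeta_1(t)\,g(p^l\zeta_1(t))$, and summation over $l$ gives $\int_0^{p\zeta_1(t)}g(u)\d u\ge p(1-p)\sum_{l=1}^{\infty}p^{l-1}\zeta_1(t)\,g(p^l\zeta_1(t))$. This is exactly the reduced series up to the factor $p(1-p)$, so the series is at most $\frac{1}{p(1-p)}\int_0^{p\zeta_1(t)}U^{(-1)}((\nu_t(u))^{-2})\d u$. Taking the supremum over $t\in S$ yields $C_p$, and multiplying by $\eta_f$ as in \eqref{lemma:1_main} produces \eqref{eq:cor1}.

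The main obstacle I anticipate is the bookkeeping with the generalized inverses: one must ensure that $\sigma^{(-1)}(\zeta^{(-1)}(\cdot)/2)$ is well defined on the relevant range and that the identities $\zeta(2\sigma(\epsilon_l(t)))=p^{l-1}\zeta_1(t)$ and $\mu_{l+1}(t)=\nu_t(p^l\zeta_1(t))$ hold as equalities rather than merely as inequalities; here the continuity and strict monotonicity of $\zeta$ and $\sigma$ are precisely what make the inverses cancel. The direction of monotonicity of $g$ (nonincreasing, so the right endpoint furnishes the minimum) is the other point to get right, since reversing it would spoil the lower bound on the integral and hence the factor $1/(p(1-p))$.
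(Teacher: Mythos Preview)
Your proposal is correct and follows essentially the same route as the paper's proof: both choose the geometric sequence $\epsilon_l(t)=\sigma^{(-1)}\bigl(\zeta^{(-1)}(p^{l-1}\zeta_1(t))/2\bigr)$ so that $\zeta(2\sigma(\epsilon_l(t)))=p^{l-1}\zeta_1(t)$ and $\mu_{l+1}(t)=\nu_t(p^l\zeta_1(t))$, and then compare the resulting series termwise to the integral over $[\zeta_1(t)p^{l+1},\zeta_1(t)p^{l}]$ using the monotonicity of $u\mapsto U^{(-1)}((\nu_t(u))^{-2})$ to recover the factor $1/(p(1-p))$. Your explicit justification of the monotonicity direction and of why the inverses cancel is slightly more detailed than the paper's presentation, but the argument is the same.
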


\begin{proof}
Let the sequence $\epsilon_k(t), k\ge1$, be defined as
\[
\epsilon_k(t) = \sigma^{(-1)}\bigl(\zeta^{(-1)}\bigl(\zeta_1(t)p^{k-1}\bigr)\bigr).
\]
Then
\[
\zeta\bigl(2\sigma\bigl(\epsilon_l(t)\bigr)\bigr) = \zeta_1(t) p^{l-1}
\]
and
\[
\mu_{l+1}(t) = \mu\bigl(C_t\bigl(\epsilon_{l+1}(t)\bigr) \cap S\bigr) = \nu_t\bigl(\zeta_1(t)p^l\bigr).
\]
Therefore, from (\ref{lemma:1_main}) and the following inequality we
obtain the assertion of the corollary:\vadjust{\eject}
\begin{align*}
\!\sum_{l=1}^{\infty} \zeta\bigl(2\sigma\bigl(\epsilon_l(t)\bigr)\bigr)
U^{(-1)}\!\biggl(\frac{1}{\mu^2_{l+1}(t)}\biggr) &\,{=}\, \sum_{l=1}^{\infty}\zeta
_1(t)p^{l-1} U^{(-1)}
\bigl( \bigl(\nu_t\bigl(\zeta_1(t)p^l\bigr)\bigr)^{-2} \bigr)
\\
&\,{\le}\,\sum_{l\ge1}\frac{1}{p(1-p)}\int_{\zeta_1(t)p^{l+1}}^{\zeta
_1(t)p^{l}} U^{(-1)}\bigl(\nu_t(u)^{-2}\bigr) \d u\\
&\,{\le}\,\int_{0}^{\zeta_1(t)p} U^{(-1)}\bigl(\nu_t(u)^{-2}\bigr) \d u .
\qedhere
\end{align*}
\end{proof}

\begin{remark}
We will further find additional conditions on $\eta_f$ and $C_p$ from
(\ref{eta_f}) and (\ref{C_p}) such that
the constant $C_p$ is finite and the random variable $\eta_f$ is finite
with probability one.
In this case, we get that $\mu$ is a majorizing measure on $S$ for $X$.
In Theorems \ref{theorem:2} and \ref{theorem_3}, these conditions will
be formulated for processes from the class $\varDelta_2$ and space
$L_q(\varOmega)$.
\end{remark}

\begin{thm}\label{theorem:1}
Let assumptions of Lemma~\ref{lemma:1} be satisfied, and let the
following conditions hold:
\begin{enumerate}
\item[a)]\ \vspace*{-18pt}
\[
\sup_{t\in S} \int_0^{\zeta_1(t)} U^{(-1)} \bigl( \bigl(\nu_t(u)\bigr)^{-2}
\bigr) \d u < \infty,
\]
\item[b)] there exists a constant $r>0$ such that
\begin{equation}
\int_S \int_S \E U \left( \frac{|X(u) - X(v)| + |f(u) - f(v)|}{\zeta
(d_f(u,v)) r} \right) \d\bigl(\mu(u)\times\mu(v)\bigr) < \infty.
\label{eq:thm1_ass_1}
\end{equation}
\end{enumerate}
Then, for all $x>0$, we have the inequality
\begin{align}
&\P\Bigl\{ \sup_{t\in S}\big|X(t) - f(t)\big| > x \Bigr\} \nonumber\\
&\quad\le\inf_{0\le\alpha\le1} \inf_{0<p<1}
\biggl[ \left(U\left({\alpha x}\Big/{\left\| \int_S \bigl(X(u) - f(u)\bigr) \frac{\d\mu
(u)}{\mu(S)}\right\|_U } \right) \right)^{-1}\nonumber\\
&\qquad+\P\biggl\{ \eta_f > \frac{(1-\alpha) x}{C_p} \biggr\}
\biggr],
\end{align}
where $\eta_f$ and $C_p$ are defined in (\ref{eta_f}) and (\ref{C_p}),
respectively.
\end{thm}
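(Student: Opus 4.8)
The plan is to split $\sup_{t\in S}|X(t)-f(t)|$ into two pieces by the triangle inequality: the averaged deviation already controlled by Corollary~\ref{corollary:3.1}, and the mean term
\[
m := \frac{1}{\mu(S)}\int_S \bigl(X(u)-f(u)\bigr)\d\mu(u),
\]
which is a single $L_U(\varOmega)$-random variable not depending on $t$. Since
\[
X(t)-f(t) = \Bigl(X(t)-f(t)-m\Bigr) + m,
\]
the triangle inequality together with the $t$-independence of $|m|$ gives
\[
\sup_{t\in S}\big|X(t)-f(t)\big| \le \sup_{t\in S}\Bigl|X(t)-f(t)-m\Bigr| + |m|.
\]

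First I would verify that the hypotheses of Corollary~\ref{corollary:3.1} are in force. Condition~(a) makes $C_p$ from~\eqref{C_p} finite for every $0<p<1$, since the integral defining $C_p$ runs over $[0,p\zeta_1(t)]\subseteq[0,\zeta_1(t)]$. Condition~(b), combined with the elementary bound $|X(u)-X(v)-f(u)+f(v)|\le|X(u)-X(v)|+|f(u)-f(v)|$ and the monotonicity of $U$ on the positive half-line, shows by Fubini's theorem that $\int_S\int_S U(\cdot/r)\d(\mu\times\mu)$ is finite with probability one for the given $r$; hence the Luxembourg norm $\eta_f$ from~\eqref{eta_f} satisfies $\eta_f\le r<\infty$ almost surely. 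Corollary~\ref{corollary:3.1} then yields, with probability one,
\[
\sup_{t\in S}\Bigl|X(t)-f(t)-m\Bigr| \le \eta_f C_p.
\]

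Next I would carry out the probabilistic splitting. Fix $0\le\alpha\le1$ and $0<p<1$. If $\sup_{t\in S}|X(t)-f(t)|>x$, then at least one of the two summands must exceed its allotted share, that is, $|m|>\alpha x$ or $\eta_f C_p>(1-\alpha)x$ (otherwise the sum would be at most $\alpha x+(1-\alpha)x=x$). Subadditivity of probability therefore gives
\[
\P\Bigl\{\sup_{t\in S}\big|X(t)-f(t)\big|>x\Bigr\}
\le \P\{|m|>\alpha x\} + \P\{\eta_f C_p>(1-\alpha)x\}.
\]
To the first term I would apply Lemma~\ref{lemma:Orlicz} with the random variable $m$, obtaining
\[
\P\{|m|>\alpha x\}\le\left(U\left(\frac{\alpha x}{\|m\|_U}\right)\right)^{-1},
\]
while the second term equals $\P\{\eta_f>(1-\alpha)x/C_p\}$. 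Taking the infimum over all $0\le\alpha\le1$ and $0<p<1$ produces exactly the stated inequality.

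The two-term split is routine; the only delicate point is ensuring that $\eta_f<\infty$ almost surely and $C_p<\infty$, so that $\eta_f C_p$ is a genuine finite random variable and Corollary~\ref{corollary:3.1} may legitimately be invoked. This is precisely what conditions~(b) and~(a) are designed to secure, and justifying it carefully through Fubini's theorem is the main technical obstacle I would expect in writing out the full argument.
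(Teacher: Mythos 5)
Your proof follows essentially the same route as the paper's: decompose $X(t)-f(t)$ into the mean term $m$ plus the averaged deviation, bound the deviation by $\eta_f C_p$ via Corollary~\ref{corollary:3.1}, control $m$ with Lemma~\ref{lemma:Orlicz}, and finish with the two-term union bound over the split $\alpha x$, $(1-\alpha)x$. One small correction: almost-sure finiteness of $\int_S\int_S U(\cdot/r)\d(\mu(u)\times\mu(v))$ yields only $\eta_f<\infty$ a.s.\ (by convexity of $U$ one can rescale $r$ to make the integral at most $1$), not $\eta_f\le r$ as you claim — but finiteness is all your argument (and the paper's) actually uses, so the conclusion is unaffected.
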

\begin{proof}
Using Fubini's theorem and (\ref{eq:thm1_ass_1}), we obtain that with
probability one
\begin{align}
&\int_S\int_S U\left(\frac{X(u) - X(v) - f(u) + f(v)}{\zeta(d_f(u,v))
r}\right)\d\bigl(\mu(u)\times\mu(v)\bigr)
\nonumber\\
&\quad\le\int_S\int_S U\left(\frac{|X(u) - X(v)| + |f(u) - f(v)|}{\zeta
(d_f(u,v)) r}\right)\d\bigl(\mu(u)\times\mu(v)\bigr) < \infty, \nonumber
\end{align}
that is, the process
\[
\frac{X(u) - X(v)-f(u)+f(v)}{\zeta(d_f(u,v))}
\]
with probability one belongs to the space $L_U^{\mu\times\mu}(S\times
S)$. Therefore, with probability one
\[
\eta_f = \left\|\frac{X(u) - X(v) - f(u) + f(v)}{\zeta(d_f(u,v))} \right
\|_{U,\mu\times\mu}^{S\times S}\vadjust{\eject}
\]
is a finite random variable. It follows from (\ref{eq:cor1}) that
\begin{equation}
\sup_{t\in S}\big|X(t) - f(t)\big| \le\frac{1}{\mu(S)}\left|\int_S \bigl(X(u) -
f(u)\bigr) \d\mu(u) \right| + \eta_f C_p
\label{eq:thm1_1}
\end{equation}
with probability one. Since $X(u) \in L_U(\varOmega)$ for $u\in S$, we
have $X(u) - f(u) \in L_U(\varOmega)$ for $u\in S$ and
\[
\frac{1}{\mu(S)} \int_S \bigl(X(u) - f(u)\bigr)\d\mu(u) \in L_U(\varOmega).
\]
Moreover,
\begin{align*}
\left\|\frac{1}{\mu(S)} \int_S \bigl(X(u) - f(u)\bigr) \d\mu(u)\right\|_U
&\le\frac{1}{\mu(S)} \int_S \big\| X(u) - f(u) \big\|_U \d\mu(u)\\
&\le\sup_{u\in S}\big\| X(u) - f(u) \big\|_U < \infty.
\end{align*}

It follows from Lemma~\ref{lemma:Orlicz} that, for any $y>0$,
\begin{align}
\P\left\{ \left|\int_S \bigl(X(u) - f(u)\bigr) \frac{\d\mu(u)}{\mu(S)} \right| >
y \right\}
\le1 / U\left(\frac{y}{\|\frac{1}{\mu(S)} \int_S (X(u) - f(u)) \d
\mu(u)\|_U}\right).
\label{eq:thm1_2}
\end{align}

It follows from (\ref{eq:thm1_1}) that, for any $0\le\alpha\le1$ and $x
> 0$,
\begin{align}
&\P\left\{ \sup_{t\in S}\big |X(t) - f(t)\big| > x \right\} \nonumber\\
&\quad\le\P\left\{ \frac{1}{\mu(S)} \left|\int_S \bigl(X(u) - f(u)\bigr) \d\mu(u)
\right| > \alpha x \right\}
+\P\big\{\eta_f C_p > (1-\alpha) x\big\}.
\label{eq:thm1_3}
\end{align}
The statement of the theorem follows from (\ref{eq:thm1_2}) and (\ref
{eq:thm1_3}).
\end{proof}


\section{Distribution of deviations of stochastic processes from
classes $\varDelta_2$ and $\varDelta_2 \cap E$}

\begin{definition}
A stochastic process $X=\{X(t), t\in\T\}$ belongs to the class $\varDelta
_2$ if $X\in L_U(\varOmega)$,
where $U$ is an Orlicz function from the class $\varDelta_2$.
\end{definition}

\begin{thm}
\label{theorem:2}
Suppose that $X = \{X(t), t\in\T\}$ is a separable stochastic process
from the class $\varDelta_2$
that satisfies Assumption \ref{assumption_sigma}. Let $f$ be a function
satisfying Assumption~\ref{assumption:f},
where $U$ is the Orlicz N-function from the class $\varDelta_2$, let $\zeta
(y), y>0$, be an arbitrary continuous increasing function such that
$\zeta(y)\to0$ as $y\to0$, and let
\[
\frac{X(u) - X(v) - f(u) + f(v)}{\zeta(d_f(u,v))} \in L_U^{\mu\times\mu
}(\T\times\T).
\]
Suppose that the following conditions are satisfied:
\begin{itemize}
\item[a)] there exists a constant $r>0$ such that
\begin{equation}
\int_S\int_S K \biggl(\frac{\gamma(d_f(u,v))}{r} \biggr) \d\bigl(\mu
(u)\times\mu(v)\bigr) < \infty,
\label{eq:theorem:2_1}
\end{equation}
where $K$ and $x_0$ are introduced in Definition \ref{class_delta_2} of
the class $\varDelta_2$ and $\gamma(u) = u/\zeta(u)$;
\item[b)]\ \vspace*{-19pt}
\begin{equation}
\sup_{t\in S}\int_0^{\zeta_1(t)} U^{(-1)}\bigl(\bigl(\nu_t(u)\bigr)^{-2}\bigr) \d u <
\infty,
\label{eq:theorem:2_2}
\end{equation}
where $\zeta_1(t)$ and $\nu_t(u)$ are defined in Corollary~\ref{corollary:3.1}.
\end{itemize}
Then, for any $0<p<1$, the following inequality holds with probability one:
\begin{align}
&\sup_{t\in S}\left|X(t) - f(t) - \int_S \bigl(X(u) - f(u)\bigr) \frac{\d\mu
(u)}{\mu(S)} \right| \nonumber\\
&\quad\le\frac{\eta_f}{p(1-p)} \sup_{t\in S}\int_0^{\zeta_1(t) p}
U^{(-1)}\bigl(\bigl(\nu_t(u)\bigr)^{-2}\bigr) \d u,
\label{eq:theorem:2_3}
\end{align}
where
\[
\eta_f = \left\|\frac{X(u) - X(v) - f(u) + f(v)}{\zeta(d_f(u,v))} \right
\|_{U, \mu\times\mu}^{S\times S}
\]
is a finite with probability one random variable.
\end{thm}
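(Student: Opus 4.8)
The plan is to read off the inequality (\ref{eq:theorem:2_3}) directly from Corollary~\ref{corollary:3.1}, and to spend the real effort on verifying that $\eta_f$ is finite with probability one. Observe first that, substituting the definition (\ref{C_p}) of $C_p$ into the conclusion (\ref{eq:cor1}) of Corollary~\ref{corollary:3.1}, the right-hand side becomes exactly $\frac{\eta_f}{p(1-p)}\sup_{t\in S}\int_0^{\zeta_1(t)p}U^{(-1)}((\nu_t(u))^{-2})\d u$, which is the bound claimed in (\ref{eq:theorem:2_3}). Since $p<1$, hypothesis (\ref{eq:theorem:2_2}) dominates $\int_0^{\zeta_1(t)p}$ by $\int_0^{\zeta_1(t)}$ and hence guarantees that this supremum (namely $C_p$) is finite. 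Thus, once $\eta_f<\infty$ almost surely is established, the whole assertion follows from Corollary~\ref{corollary:3.1}.

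To prove that $\eta_f$ is finite with probability one, I would argue as in Theorem~\ref{theorem:1}: it suffices to show that, for the constant $r$ appearing in (\ref{eq:theorem:2_1}),
\[
\E\int_S\int_S U\!\left(\frac{X(u)-X(v)-f(u)+f(v)}{\zeta(d_f(u,v))\,r}\right)\d\bigl(\mu(u)\times\mu(v)\bigr)<\infty,
\]
because finiteness of this expectation forces the inner integral to be finite almost surely, and then property a) of $U$ (rescaling $r$) yields a finite Luxembourg norm $\eta_f$. Writing $\xi_{u,v}=X(u)-X(v)-f(u)+f(v)$ and recalling $\|\xi_{u,v}\|_U=d_f(u,v)$, I would factor the argument as
\[
\frac{\xi_{u,v}}{\zeta(d_f(u,v))\,r}=\frac{\gamma(d_f(u,v))}{r}\cdot\frac{\xi_{u,v}}{d_f(u,v)},\qquad \gamma(h)=\frac{h}{\zeta(h)},
\]
so that the second factor $Y=\xi_{u,v}/d_f(u,v)$ satisfies $\E\,U(Y)\le1$, while the first factor $w=\gamma(d_f(u,v))/r$ is precisely the quantity entering (\ref{eq:theorem:2_1}).

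The heart of the matter is the pointwise estimate of $\E\,U(wY)$. On the set where $w<1$, property a) gives $U(wY)=U(w|Y|)\le U(|Y|)$, so $\E\,U(wY)\le1$ and this region contributes at most $\mu(S)^2<\infty$ to the double integral. On the set where $w\ge1$, I invoke the $\varDelta_2$ inequality of Definition~\ref{class_delta_2}: bounding $|Y|\le\max(|Y|,x_0)=:M\ge x_0$ and using $U(wM)\le K(w)U(M)$ followed by $U(M)\le U(|Y|)+U(x_0)$, I obtain $\E\,U(wY)\le K(w)\bigl(1+U(x_0)\bigr)$. Integrating and using $K>0$ then bounds this region by $\bigl(1+U(x_0)\bigr)\int_S\int_S K(\gamma(d_f(u,v))/r)\,\d(\mu\times\mu)$, which is finite by (\ref{eq:theorem:2_1}). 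Adding the two contributions and applying Fubini's theorem completes the proof that the displayed expectation is finite, whence $\eta_f<\infty$ almost surely, and the theorem follows. The main obstacle I anticipate is the correct handling of the $\varDelta_2$ threshold $x_0$ (the inequality $U(zx)\le K(z)U(x)$ holds only for $x\ge x_0$) together with the region $w<1$ where $K$ cannot be used; the device of replacing $|Y|$ by $\max(|Y|,x_0)$ and splitting the domain is what makes the single integrability hypothesis (\ref{eq:theorem:2_1}) suffice.
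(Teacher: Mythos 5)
Your proposal is correct and follows essentially the same route as the paper: the deviation bound is read off from Corollary~\ref{corollary:3.1} (with condition b) ensuring $C_p<\infty$), and the finiteness of $\eta_f$ is reduced via Fubini's theorem to the finiteness of $\E\int_S\int_S U\bigl(\xi_{u,v}/(r\zeta(d_f(u,v)))\bigr)\d(\mu\times\mu)$, which both you and the paper establish by factoring the argument as $\gamma(d_f)/r$ times the normalized increment $\xi_{u,v}/d_f$ with $\E U(\xi_{u,v}/d_f)\le 1$, splitting according to whether $\gamma(d_f)/r$ exceeds $1$, and applying the $\varDelta_2$ bound to arrive at the same estimate $K(\gamma(d_f)/r)(1+U(x_0))+\chi_{\gamma(d_f)/r\le 1}$. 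Your device of replacing $|Y|$ by $\max(|Y|,x_0)$ merely merges the paper's two indicator cases ($|Y|>x_0$ and $|Y|\le x_0$) into one step; it is a cosmetic streamlining, not a different argument.
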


\begin{proof}
It is easy to see that the assumptions of Lemma~\ref{lemma:1} are satisfied.
Consider the function $\eta_f$. In order to show that it is finite with
probability one,
it suffices to prove that the random function
\[
\frac{(X(u) - X(v) -f(u) + f(v))}{\zeta(d_f(u,v))}
\]
belongs to the space $L_{U}^{\mu\times\mu}(S\times S)$ with probability
one. For this, it suffices to show
that there exists a number $r>0$ such that
\[
\int_S\int_S U\biggl(\frac{X(u) - X(v) - f(u) + f(v)}{r\zeta(d_f(u,v))}
\biggr) \d\bigl(\mu(u)\times\mu(v)\bigr)<\infty
\]
with probability one. It follows from Fubini's theorem that it suffices
to prove that
\begin{equation}\label{eq:theorem:2_4}
\int_S\int_S \E U\biggl(\frac{(X(u) - X(v) - f(u) + f(v))}{r\zeta
(d_f(u,v))} \biggr) \d\bigl(\mu(u)\times\mu(v)\bigr)<\infty.
\end{equation}

Since $U\in\varDelta_2$, using Assumption~\ref{assumption:f}, we have
\begin{align}
&\E U\biggl(\frac{(X(u) - X(v) - f(u) + f(v))}{r\zeta(d_f(u,v))} \biggr)
\nonumber\\
&\quad= \E\chi_{\frac{|X(u) - X(v) - f(u) + f(v)|}{d_f(u,v)} > x_0} \chi
_{\frac{\gamma(d_f(u,v))}{r} > 1} \nonumber\\
&\qquad\times U\biggl(\frac{X(u) - X(v) - f(u) + f(v)}{d_f(u,v)} \biggr) K
\biggl(\frac{\gamma(d_f(u,v))}{r} \biggr) \nonumber\\
&\qquad+ \E\chi_{\frac{|X(u) - X(v) - f(u) + f(v)|}{d_f(u,v)} \le x_0} \chi
_{\frac{\gamma(d_f(u,v))}{r} > 1}
U\biggl(x_0 \frac{\gamma(d_f(u,v))}{r} \biggr) \nonumber\\
&\qquad+ \chi_{\frac{\gamma(d_f(u,v))}{r} \le1} \E U\biggl(\frac{X(u) - X(v)
- f(u) + f(v)}{d_f(u,v)} \biggr) \nonumber\\
&\quad\le\E U\biggl(\frac{X(u) - X(v) - f(u) + f(v)}{d_f(u,v)} \biggr) K
\biggl(\frac{\gamma(d_f(u,v))}{r} \biggr) \nonumber\\
&\qquad+ U(x_0) K\biggl(\frac{\gamma(d_f(u,v))}{r} \biggr) \nonumber\\
&\qquad+ \chi_{\frac{\gamma(d_f(u,v))}{r} \le1} \E U\biggl(\frac{X(u) - X(v)
- f(u) + f(v)}{d_f(u,v)} \biggr) \nonumber\\
&\quad\le\biggl(K \biggl(\frac{\gamma(d_f(u,v))}{r} \biggr) + \chi_{\frac
{\gamma(d_f(u,v))}{r} \le1} \biggr)
\E U\biggl(\frac{X(u) - X(v) - f(u) + f(v)}{d_f(u,v)} \biggr) \nonumber
\\
&\qquad+ U(x_0) K\biggl(\frac{\gamma(d_f(u,v))}{r} \biggr) \nonumber\\
&\quad\le K \biggl(\frac{\gamma(d_f(u,v))}{r} \biggr)\bigl(1+U(x_0)\bigr) + \chi_{\frac
{\gamma(d_f(u,v))}{r} \le1}.
\label{eq:theorem:2_5}
\end{align}
Therefore, for all $r$ such that inequality (\ref{eq:theorem:2_1})
holds, we have the relation
\begin{align}
\label{eq:theorem:KozMokl_4.1:4.6}
&\E\int_S \int_S U\biggl(\frac{X(u) - X(v) - f(u) + f(v)}{r\zeta
(d_f(u,v))} \biggr) \d\bigl(\mu(u)\times\mu(v)\bigr) \nonumber\\
&\quad\le\int_S \int_S \chi_{\frac{\gamma(d_f(u,v))}{r} \le1} \d\bigl(\mu
(u)\times\mu(v)\bigr)\nonumber\\
&\qquad+ \bigl(1 + U(x_0)\bigr) \int_S \int_S K \biggl(\frac{\gamma
(d_f(u,v))}{r} \biggr) \d\bigl(\mu(u)\times\mu(v)\bigr) < \infty.
\end{align}
Inequality (\ref{eq:theorem:2_4}) and the statement of Theorem \ref
{theorem:2} follows from the last relation.
\end{proof}

\begin{corollary}
\label{cor:KozMokl_4.1}
Let the assumptions of Theorem~\ref{theorem:2} be satisfied. Let $r$ be
a number such that condition \eqref{eq:theorem:2_1}
holds. Then, for any $x>r$, we have the inequality
\[
\P\{\eta_f>x\} \le Z(x),
\]
where
\[
Z(x)=\int_S\int_S\biggl[\chi_{\frac{\gamma(d_f(u,v))}{x}\le
1} + \bigl(1+U(x_0)\bigr) K\biggl( \frac{\gamma(d_f(u,v))}{x}\biggr) \biggr]
\d\bigl(\mu(u)\times\mu(v)\bigr).
\]
\end{corollary}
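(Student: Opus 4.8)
The plan is to bound the tail of the Luxembourg norm $\eta_f$ by relating the event $\{\eta_f > x\}$ to the Orlicz integral functional and then invoking Markov's inequality. Write $g(u,v) = \frac{X(u)-X(v)-f(u)+f(v)}{\zeta(d_f(u,v))}$ and set $I_x = \int_S\int_S U(g(u,v)/x)\,\d(\mu(u)\times\mu(v))$, a nonnegative random variable. By the definition \eqref{Luxemburg_norm} of the Luxembourg norm, if $I_x \le 1$, then $x$ belongs to the set over which the infimum defining $\eta_f$ is taken, so $\eta_f \le x$; taking the contrapositive gives the inclusion $\{\eta_f > x\} \subseteq \{I_x > 1\}$.

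First I would apply Markov's inequality to the nonnegative variable $I_x$: using the inclusion above and Fubini's theorem (legitimate since the integrand is nonnegative) to exchange expectation and integration,
\[
\P\{\eta_f > x\} \le \P\{I_x > 1\} \le \E I_x = \int_S\int_S \E\, U\biggl(\frac{X(u)-X(v)-f(u)+f(v)}{x\,\zeta(d_f(u,v))}\biggr)\,\d\bigl(\mu(u)\times\mu(v)\bigr).
\]
Next I would reuse the pointwise bound already established in the proof of Theorem~\ref{theorem:2}. Applying inequality \eqref{eq:theorem:2_5} verbatim with the constant $r$ there replaced by $x$ — the estimate uses only the $\varDelta_2$ property of $U$, property (a) of $N$-functions, and Assumption~\ref{assumption:f}, none of which depends on the particular value of the positive constant — yields
\[
\E\, U\biggl(\frac{X(u)-X(v)-f(u)+f(v)}{x\,\zeta(d_f(u,v))}\biggr) \le K\biggl(\frac{\gamma(d_f(u,v))}{x}\biggr)\bigl(1+U(x_0)\bigr) + \chi_{\frac{\gamma(d_f(u,v))}{x}\le 1}.
\]
Integrating this over $S\times S$ against $\mu\times\mu$ gives $\E I_x \le Z(x)$, which combined with the previous display is exactly the claimed inequality $\P\{\eta_f > x\} \le Z(x)$.

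Finally I would check that the bound is nontrivial. Since $x > r$ and $K$ is increasing, $K(\gamma(d_f(u,v))/x) \le K(\gamma(d_f(u,v))/r)$, so the second integral in $Z(x)$ is dominated by the finite quantity appearing in condition \eqref{eq:theorem:2_1}, while the first integral is at most $(\mu(S))^2 < \infty$ because $\mu$ is a finite measure; hence $Z(x) < \infty$ for every $x > r$. The only points needing care are the passage from the norm inequality $\eta_f > x$ to the integral inequality $I_x > 1$ and the exchange of $\E$ with the double integral, but both are routine once one observes that $I_x$ is a genuine measurable nonnegative random variable. I therefore expect no real obstacle here beyond this bookkeeping, since the substantive $\varDelta_2$-estimate has already been carried out in Theorem~\ref{theorem:2}.
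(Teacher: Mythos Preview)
Your proof is correct and follows essentially the same route as the paper: pass from the Luxembourg-norm event $\{\eta_f>x\}$ to the integral event $\{I_x>1\}$, apply Markov's (Chebyshev's) inequality, and then invoke the pointwise estimate \eqref{eq:theorem:2_5} with $r$ replaced by $x$ to obtain $Z(x)$. Your version is in fact slightly more careful, writing the first step as an inclusion rather than an equality and explicitly verifying that $Z(x)<\infty$ for $x>r$.
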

\begin{proof}
It follows from (\ref{eq:theorem:2_3}) and Chebyshev's inequality that
\begin{align}
&\P\{\eta_f > x\} \nonumber\\
&\quad= \P\left\{\int_S \int_S U\biggl(\frac{X(u) - X(v) - f(u) + f(v)}{x \zeta
(d_f(u,v))} \biggr)
\d\bigl(\mu(u)\times\mu(v)\bigr) > 1\right\} \nonumber\\
&\quad\le \E\int_S \int_S U\biggl(\frac{X(u) - X(v) - f(u) +
f(v)}{x\zeta(d_f(u,v))} \biggr) \d\bigl(\mu(u)\times\mu(v)\bigr)
\nonumber\\
&\quad\le Z(x).\\[-20pt]\notag
\end{align}
\end{proof}

\begin{corollary}\label{corollary_4.2}
Let the assumptions of Theorem~\ref{theorem:2} be satisfied. Let
$U(x)\in\varDelta_2\cap E$ and $z_0=0$ in Definition~\ref{def:class_E}.
Then, for any $x>0$, we have the inequality
\[
\P\{\eta_f>x\} \le\frac{Z(r)B}{U(x/Dr)},
\]
where B and D are the constants from Definition~\ref{def:class_E}, and
$r$ is a constant such that condition (\ref{eq:theorem:2_1})
holds, $Z(r)$ is defined in Corollary~\ref{cor:KozMokl_4.1}, and
\[
Z(r)\le\mu^2(S)+(1+U(x_0))\int_S\int_SK\biggl( \frac
{\gamma(d_f(u,v))}{r}\biggr) \d\bigl(\mu(u)\times\mu(v)\bigr) = Z_1(r).
\]
\end{corollary}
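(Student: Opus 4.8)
The plan is to combine the Markov-type estimate already obtained in Corollary~\ref{cor:KozMokl_4.1} with the multiplicative property of the class~$E$, using the hypothesis $z_0=0$ so that the inequality $U(a)U(b)\le BU(Dab)$ is available for \emph{all} $a,b\ge0$. Write $g(u,v)=X(u)-X(v)-f(u)+f(v)$ and set
\[
\xi=\int_S\int_S U\biggl(\frac{g(u,v)}{r\zeta(d_f(u,v))}\biggr)\d\bigl(\mu(u)\times\mu(v)\bigr).
\]
The chain of estimates \eqref{eq:theorem:2_5}--\eqref{eq:theorem:KozMokl_4.1:4.6} in the proof of Theorem~\ref{theorem:2} already shows that $\E\xi\le Z(r)$ for the chosen $r$, and I would take this as the starting point.

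The crux is a pointwise rescaling of the integrand from the threshold $r$ to an arbitrary threshold $x>0$. Applying the class~$E$ inequality with $a=\frac{|g(u,v)|}{x\zeta(d_f(u,v))}$ and $b=\frac{x}{Dr}$ (both nonnegative, hence admissible precisely because $z_0=0$) gives
\[
U\biggl(\frac{|g(u,v)|}{x\zeta(d_f(u,v))}\biggr)\,U\biggl(\frac{x}{Dr}\biggr)\le B\,U\biggl(\frac{|g(u,v)|}{r\zeta(d_f(u,v))}\biggr),
\]
since $D\cdot\frac{|g|}{x\zeta}\cdot\frac{x}{Dr}=\frac{|g|}{r\zeta}$ and $U$ is even. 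Dividing by $U(x/Dr)>0$ and integrating over $S\times S$ yields
\[
\int_S\int_S U\biggl(\frac{g(u,v)}{x\zeta(d_f(u,v))}\biggr)\d\bigl(\mu(u)\times\mu(v)\bigr)\le\frac{B}{U(x/Dr)}\,\xi.
\]

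Finally I would invoke the definition of the Luxembourg norm: the event $\{\eta_f>x\}$ is contained in the event that the left-hand integral above exceeds $1$, so on $\{\eta_f>x\}$ one has $\xi>U(x/Dr)/B$. Markov's inequality together with $\E\xi\le Z(r)$ then gives
\[
\P\{\eta_f>x\}\le\P\Bigl\{\xi>U(x/Dr)/B\Bigr\}\le\frac{B\,\E\xi}{U(x/Dr)}\le\frac{Z(r)B}{U(x/Dr)},
\]
which is the claimed bound. The supplementary estimate $Z(r)\le Z_1(r)$ is then immediate: in the definition of $Z(r)$ one only replaces the indicator $\chi_{\gamma(d_f(u,v))/r\le1}$ by $1$ and integrates, producing the term $\mu^2(S)$. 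The only genuinely delicate point is the choice of $a$ and $b$ in the class~$E$ inequality; everything else is bookkeeping, and the hypothesis $z_0=0$ is exactly what removes any restriction on the admissible range of $a$ and $b$.
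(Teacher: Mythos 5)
Your proof is correct and follows essentially the same route as the paper's: the Luxembourg-norm characterization of the event $\{\eta_f>x\}$, Chebyshev/Markov's inequality, the class-$E$ inequality with $z_0=0$ used to pass from threshold $x$ to threshold $r$ (with exactly the same choice $a=\frac{|g|}{x\zeta(d_f)}$, $b=\frac{x}{Dr}$), and the bound $\E\xi\le Z(r)$ coming from (\ref{eq:theorem:KozMokl_4.1:4.6}). The only cosmetic difference is the order of operations: you apply the class-$E$ rescaling pointwise (deterministically) before invoking Markov, while the paper applies Markov first and rescales inside the expectation; the two orderings yield the identical bound.
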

\begin{proof}
It follows from (\ref{eq:theorem:KozMokl_4.1:4.6}), the definition of
class $E$, and Chebyshev's inequality that
\begin{align}
&\P\{\eta_f > x\} \nonumber\\
&\quad= \P\biggl\{\int_S \int_S U\biggl(\frac{X(u) - X(v) - f(u) + f(v)}{x\zeta
(d_f(u,v))} \biggr) \d\bigl(\mu(u)\times\mu(v)\bigr) > 1\biggr\} \nonumber\\
&\quad\le\E\int_S \int_S U\biggl(\frac{X(u) - X(v) - f(u) +
f(v)}{d_f(u,v)} \frac{\gamma(d_f(u,v))}{x} \biggr) \d\bigl(\mu(u)\times\mu
(v)\bigr) \nonumber\\
&\quad= \frac{1}{U(\frac{x}{Dr})} \E\int_S \int_S U\biggl(\frac
{X(u) - X(v) - f(u) + f(v)}{d_f(u,v)} \frac{\gamma(d_f(u,v))}{x}
\biggr) \nonumber\\
&\qquad\times U\biggl(\frac{x}{D r}\biggr)\d\bigl(\mu(u)\times\mu(v)\bigr)\notag\\
&\quad\le\frac{B}{U(x/(D r))} \nonumber\\
&\qquad\times\E\int_S \int_S U\biggl(\frac{X(u) - X(v) - f(u) + f(v)}{d_f(u,v)}
\frac{\gamma(d_f(u,v))}{r} \biggr) \d\bigl(\mu(u)\times\mu(v)\bigr) \nonumber\\
&\quad\le\frac{Z(r) B}{U(x/(Dr))}.\\[-26pt]\notag
\end{align}
\end{proof}

\begin{corollary}\label{corollary:4.2n}
Let the assumptions of Theorem~\ref{theorem:2} be satisfied. Then

a) for all $x>r$, we have the inequality
\begin{align}\label{eq:corollary:4.2n:1}
&\P\Bigl\{\sup_{t\in S} \big|X(t) - f(t)\big| > x \Bigr\} \nonumber\\
&\quad\le\inf_{0<\alpha<1}\inf_{0<p<1}
\biggl(1/ U\biggl(\frac{x\alpha}{\|\int_S (X(u) \,{-}\, f(u))\d\mu(u) / \mu(S)
\|_U} \biggr) {+}\, Z\biggl(\frac{x(1\,{-}\,\alpha)}{C_p} \biggr)\!\biggr),
\end{align}
where $Z(x)$ is determined in Corollary~\ref{cor:KozMokl_4.1}, $C_p$ is
determined in Theorem \ref{theorem:1}, and $r$ is a constant such that
condition \eqref{eq:theorem:2_1}
holds;\vadjust{\eject}

b) if $U\in\varDelta_2 \cap E$ with $z_0 = 0$, then, for all $x>0$, we
have the inequality
\begin{align}
&\P\Bigl\{\sup_{t\in S} \big|X(t) - f(t)\big| > x \Bigr\} \nonumber\\
&\quad\le\inf_{0<\alpha<1}\inf_{0<p<1} \biggl(1/ U\biggl(\frac{x\alpha}{\|\int
_S (X(u) - f(u)) \d\mu(u) / \mu(S) \|_U} \biggr) \nonumber\\
&\qquad + Z(r)B/U\biggl(\frac{x(1-\alpha)}{D r C_p} \biggr)\biggr),
\end{align}
where $B$ and $D$ are the constants determined in Definition~\ref
{def:class_E}, $r$ is a constant such that condition (\ref
{eq:theorem:2_1}) holds true
and $Z(x)$ is determined in Corollary~\ref{cor:KozMokl_4.1}.
\end{corollary}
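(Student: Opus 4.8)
The plan is to feed the tail bounds for $\eta_f$ from Corollaries~\ref{cor:KozMokl_4.1} and~\ref{corollary_4.2} into the two-term estimate of Theorem~\ref{theorem:1}. Before doing so I would verify that the hypotheses of Theorem~\ref{theorem:1} hold under the assumptions of Theorem~\ref{theorem:2}: its condition a) is exactly \eqref{eq:theorem:2_2}, and its condition b), inequality \eqref{eq:thm1_ass_1}, is furnished by the chain \eqref{eq:theorem:2_5}--\eqref{eq:theorem:KozMokl_4.1:4.6} proved for the constant $r$ satisfying \eqref{eq:theorem:2_1}. Hence Theorem~\ref{theorem:1} is applicable, and in fact its proof yields, for each fixed $\alpha\in(0,1)$ and $p\in(0,1)$, the pointwise bound
\[
\P\Bigl\{\sup_{t\in S}\big|X(t)-f(t)\big|>x\Bigr\}\le\biggl(U\biggl(\frac{x\alpha}{\|\int_S(X(u)-f(u))\d\mu(u)/\mu(S)\|_U}\biggr)\biggr)^{-1}+\P\Bigl\{\eta_f>\frac{(1-\alpha)x}{C_p}\Bigr\}.
\]

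For part a), I would bound the last probability by $Z\bigl((1-\alpha)x/C_p\bigr)$ using Corollary~\ref{cor:KozMokl_4.1}. Since that estimate is a genuine upper bound for $\P\{\eta_f>y\}$ at every argument $y$ (the chain leading to \eqref{eq:theorem:KozMokl_4.1:4.6} does not itself require $y>r$; that restriction only guarantees $Z(y)<\infty$), the substitution is legitimate for every admissible $(\alpha,p)$, and taking the infimum over $0<\alpha<1$ and $0<p<1$ produces \eqref{eq:corollary:4.2n:1}. The role of the hypothesis $x>r$ is precisely to ensure that the argument $(1-\alpha)x/C_p$ can be pushed above $r$ (for instance by letting $\alpha$ be small), so that the right-hand infimum is finite rather than vacuous.

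For part b), under the stronger assumption $U\in\varDelta_2\cap E$ with $z_0=0$ I would instead invoke Corollary~\ref{corollary_4.2}, which gives $\P\{\eta_f>y\}\le Z(r)B/U\bigl(y/(Dr)\bigr)$ for \emph{all} $y>0$ with the fixed constants $B,D,r$. Substituting $y=(1-\alpha)x/C_p$ turns the second term into $Z(r)B/U\bigl((1-\alpha)x/(DrC_p)\bigr)$, and the double infimum over $\alpha$ and $p$ then yields the stated inequality for every $x>0$; here no range restriction on $x$ is needed because the $E$-estimate is valid at all positive arguments.

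Since both parts are essentially substitutions into Theorem~\ref{theorem:1}, I expect no analytic difficulty. The only points needing care are bookkeeping: confirming that the single constant $r$ of \eqref{eq:theorem:2_1} is the one entering both $Z(r)$ and the validity/finiteness ranges, and keeping track of which of Corollaries~\ref{cor:KozMokl_4.1} and~\ref{corollary_4.2} supplies the finiteness of the bound (the $x>r$ threshold in part a) versus the unrestricted $E$-estimate in part b)). The main conceptual step---establishing the tail behaviour of $\eta_f$---has already been carried out in those corollaries, so the present statement only has to assemble them.
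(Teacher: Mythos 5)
Your proposal is correct and follows exactly the paper's route: the paper's own proof consists of the two sentences "Statement a) follows from Theorem~\ref{theorem:1} and Corollary~\ref{cor:KozMokl_4.1}; statement b) follows from Theorem~\ref{theorem:1} and Corollary~\ref{corollary_4.2}," which is precisely the substitution you carry out. Your additional bookkeeping (verifying Theorem~\ref{theorem:1}'s hypotheses via \eqref{eq:theorem:2_2} and the chain ending at \eqref{eq:theorem:KozMokl_4.1:4.6}, and explaining the role of the threshold $x>r$) just makes explicit what the paper leaves implicit.
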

\begin{proof}
Statement a) follows from Theorem \ref{theorem:1} and Corollary~\ref
{cor:KozMokl_4.1}. Statement b) follows from Theorem~\ref{theorem:1} and
Corollary~\ref{corollary_4.2}.
\end{proof}

\begin{thm}\label{theorem_3}
Suppose that $X = \{X(t), t\in\T\}$ is a separable stochastic process
from the space $L_q(\varOmega)$, $q>1$, satisfying Assumption \ref
{assumption_sigma}. Let $f \in L_{q}^{\mu}(S)$ be a~function satisfying
Assumption~\ref{assumption:f}, let $\zeta(y), y>0$, be an arbitrary
continuous increasing function such that $\zeta(y)\to0$ as $y\to0$,
and let the
following conditions hold:
\begin{align*}
&\varDelta_q = \int_S\int_S \left(\gamma\bigl(d_f(u,v)\bigr)\right)^q \d\bigl(\mu(u)\times
\mu(v)\bigr) < \infty,\\
&\sup_{t\in S}\int_0^{\zeta_1(t)} \bigl(\nu_t(u)\bigr)^{-2/q} \d u < \infty,
\end{align*}
where $\gamma(y) = y/\zeta(y)$, $\zeta_1(t)$ and $\nu_t(u)$ are defined
in Corollary~\ref{corollary:3.1}. Then, for any $0<p<1$ and $x>0$, we
have the inequality
\begin{equation}\label{eq:theorem_3:3}
\P\Bigl\{\sup_{t\in S} \big|X(t) - f(t)\big| > x \Bigr\}
\le x^{-q} \bigl(\varGamma_q^{\frac{1}{q+1}} +
\bigl(D_{p,q}^q \varDelta_q \bigr)^{\frac{1}{q+1}} \bigr)^{q+1},
\end{equation}
where
\begin{align}
\varGamma_q &= \E\left(\int_S \bigl(X(u) - f(u)\bigr) \frac{\d\mu(u)}{\mu(S)} \right)^q,\notag\\
\label{eq:theorem_3:Dpq}
D_{p,q}&=\sup_{t\in S}\frac{1}{p(1-p)}\int_0^{\zeta_1(t) p} \bigl(\nu
_t(u)\bigr)^{-2/q} \d u.
\end{align}
\end{thm}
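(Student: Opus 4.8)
The plan is to specialize the machinery of Corollary~\ref{corollary:3.1} (equivalently Theorem~\ref{theorem:1}) to the $N$-function $U(x)=|x|^q$, for which $U\in\varDelta_2\cap E$ and $U^{(-1)}(y)=y^{1/q}$. Under this choice the constant $C_p$ of \eqref{C_p} becomes exactly the quantity $D_{p,q}$ of \eqref{eq:theorem_3:Dpq}, since $U^{(-1)}\bigl((\nu_t(u))^{-2}\bigr)=(\nu_t(u))^{-2/q}$, and the Luxembourg norm defining $\eta_f$ in \eqref{eta_f} reduces to the ordinary $L_q$-norm. With this identification, condition a) of Theorem~\ref{theorem:1} is precisely the second displayed hypothesis $\sup_{t\in S}\int_0^{\zeta_1(t)}(\nu_t(u))^{-2/q}\d u<\infty$, so all the structural assumptions needed to invoke the earlier results are in force once I verify that $\eta_f$ is finite with probability one.

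First I would establish the moment identity that drives the whole argument. Recalling that the Luxembourg norm in $L_q(\varOmega)$ is $\|\xi\|_U=(\E|\xi|^q)^{1/q}$, for each pair $(u,v)$ one has $\E|X(u)-X(v)-f(u)+f(v)|^q=d_f(u,v)^q$, and hence
\[
\E\left|\frac{X(u)-X(v)-f(u)+f(v)}{\zeta(d_f(u,v))}\right|^q=\left(\frac{d_f(u,v)}{\zeta(d_f(u,v))}\right)^q=\bigl(\gamma(d_f(u,v))\bigr)^q.
\]
Integrating over $S\times S$ and applying Fubini's theorem gives $\E\eta_f^q=\varDelta_q<\infty$ by the first hypothesis, so $\eta_f<\infty$ with probability one; the $L_q$-norm is explicit here, which is why this step bypasses the $\varDelta_2$-estimates used in Theorem~\ref{theorem:2}.

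Next I would pass from the almost sure bound to the distributional estimate. Writing $A=\frac{1}{\mu(S)}\int_S(X(u)-f(u))\d\mu(u)$, inequality \eqref{eq:cor1} of Corollary~\ref{corollary:3.1} yields $\sup_{t\in S}|X(t)-f(t)|\le|A|+\eta_f D_{p,q}$ with probability one. For any split $x=a+b$ with $a,b>0$ this gives
\[
\P\Bigl\{\sup_{t\in S}|X(t)-f(t)|>x\Bigr\}\le\P\{|A|>a\}+\P\{\eta_f D_{p,q}>b\}.
\]
Applying the Chebyshev inequality with the $q$-th moment to each summand, together with $\E|A|^q=\varGamma_q$ and $\E\eta_f^q=\varDelta_q$ from the previous step, produces the bound $a^{-q}\varGamma_q+b^{-q}D_{p,q}^q\varDelta_q$ for every admissible split.

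The last and only genuinely new computation is the optimization of this bound over the split. Setting $M=\varGamma_q$ and $N=D_{p,q}^q\varDelta_q$, I would minimize $M a^{-q}+N b^{-q}$ subject to $a+b=x$; differentiation shows the minimum occurs at $a/b=(M/N)^{1/(q+1)}=:\lambda$, and substituting $a=\lambda x/(1+\lambda)$, $b=x/(1+\lambda)$ collapses the bound to
\[
x^{-q}(1+\lambda)^q\bigl(\lambda^{-q}M+N\bigr)=x^{-q}\bigl(M^{1/(q+1)}+N^{1/(q+1)}\bigr)^{q+1}=x^{-q}\bigl(\varGamma_q^{\frac{1}{q+1}}+(D_{p,q}^q\varDelta_q)^{\frac{1}{q+1}}\bigr)^{q+1},
\]
which is exactly \eqref{eq:theorem_3:3}. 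The main obstacle is precisely carrying out this minimization so that the two Chebyshev terms fuse into a single $(q+1)$-st power; the algebraic identity $(1+\lambda)^q(\lambda^{-q}M+N)=(M^{1/(q+1)}+N^{1/(q+1)})^{q+1}$ for $\lambda=(M/N)^{1/(q+1)}$ is the key simplification, and no optimization over $p$ is needed since the statement is asserted for each fixed $0<p<1$.
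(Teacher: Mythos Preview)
Your proposal is correct and follows essentially the same route as the paper: specialize the earlier machinery to $U(x)=|x|^q$ so that $C_p=D_{p,q}$, bound $\P\{\sup|X-f|>x\}$ by two Chebyshev terms $\varGamma_q/(\alpha x)^q + D_{p,q}^q\varDelta_q/((1-\alpha)x)^q$, and optimize over the split to obtain \eqref{eq:theorem_3:3}. The only cosmetic difference is that the paper reaches the second Chebyshev term via Corollary~\ref{corollary:4.2n}\,b) (using $U\in\varDelta_2\cap E$ with $B=D=1$, $x_0=0$, $K(y)=y^q$, and then letting $r\to0$ so that $Z(r)r^q\to\varDelta_q$), whereas you bypass this by computing $\E\eta_f^q=\varDelta_q$ directly from Fubini and the identity $\E|X(u)-X(v)-f(u)+f(v)|^q=d_f(u,v)^q$; your shortcut is cleaner for the $L_q$ case but yields exactly the same bound before optimization.
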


\begin{proof}
Consider inequality (\ref{eq:corollary:4.2n:1}). In this case,
\[
\left\|\int_S \bigl(X(u) - f(u)\bigr) \frac{\d\mu(u)}{\mu(S)} \right\|_U =
\varGamma_q^{1/q},
\]
$B=D=1$, $x_0 = 0$, $K(y) = y^q$, $r>0$,
\[
C_p = \sup_{t\in S}\frac{1}{p(1-p)} \int_0^{p\zeta_1(t)} \bigl(\nu
_t(u)\bigr)^{-\frac{2}{q}} \d u,
\]
and $Z(r) r^{q} \to\varDelta_q$ as $r\to0$, where\vadjust{\eject}
\begin{align*}
Z(r) r^{q} &= r^q \int_S\int_S\chi_{\frac{\gamma(d_f(u,v))}{r}\le1} \d
\bigl(\mu(u)\times\mu(v)\bigr)\\
&\quad+ \int_S\int_S \bigl(\gamma\bigl(d_f(u,v)\bigr)\bigr)^q \d\bigl(\mu(u)\times\mu(v)\bigr).
\end{align*}
It follows from (\ref{eq:corollary:4.2n:1}) that, for any $0<p<1$,
\[
\P\Bigl\{\sup_{t\in S}\big|X(t) - f(t)\big| > x \Bigr\} \le\inf_{0\le\alpha
\le1}
\biggl(\frac{\varGamma_q}{\alpha^q x^q} + \frac{C_{p}^q \varDelta_q}{(1-\alpha
)^q x^{q}}\biggr).
\]
Inequality (\ref{eq:theorem_3:3}) follows from the last inequality
after taking the infimum with respect to $\alpha$.
\end{proof}

\section{Example of existence of majorizing measure for $L_2(\varOmega)$-process}

In this section, we show that the Lebesgue measure is majorizing on $S$
for some process $X$ from the space $L_2(\varOmega)$.


Let $S = \T= [0, T]$. Assume that $\rho(u,v) = d_f(u,v) = |u-v|$ and
let $\mu$ be the Lebesgue measure,
that is, $\mu(S) = T$. Then
\[
C_t(u) = \big\{s \colon|t - s| \le u \big\} = [t-u, t+u]
\]
and
\[
C_t \cap S = \min\{T, t+u\} - \max\{0, t-u\}.
\]
The function $\zeta(u) = u^\alpha$,
$\alpha> 0$, satisfies the condition of Lemma \ref{lemma:1};
therefore, $\gamma(u) = u^{1-\alpha}$ and the expressions in Theorem
\ref{theorem_3} take the following form:
\begin{align}
\nu_t(u) &= \min\left\{T, t+\sigma^{(-1)}\left(\frac{1}{2}u^{1/\alpha
}\right)\right\} -
\max\left\{0, t-\sigma^{(-1)}\left(\frac{1}{2}u^{1/\alpha}\right)\right
\},
\label{ex:nu_t(u)}\\
\zeta_1(t) &= \zeta\Bigl(2\sigma\Bigl(\sup_{s\in S}|t-s|\Bigr)\Bigr)
= \bigl(2 \sigma\bigl(\max\{t, T-t\}\bigr)\bigr)^{\alpha},\notag
\end{align}
and
\[
\varDelta_q = \int_0^T \int_0^T \bigl(d_f(u,v)\bigr)^{(1 -\alpha) q} \d u \d v.
\]

Let $q = 2$, that is, $X(t)$ is a stochastic process from $L_2(\varOmega
)$. Assume that $X$ is a centered process with
covariance function $R_X(u, v) = \E X(u) X(v)$. Then using Fubini's
theorem, we obtain the following representation of
$\varGamma_q$ from Theorem \ref{theorem_3}:
\begin{align*}
\varGamma_q &= \E\biggl(\int_0^T \bigl(X(u) - f(u)\bigr) \frac{\d u}{T}\biggr)^2\\
&=\frac{1}{T^2} \int_0^T \int_0^T \E\bigl(X(u) - f(u)\bigr)\bigl(X(v) - f(v)\bigr) \d v \d u\\
&=\frac{1}{T^2}\int_0^T\int_0^T R_X(u, v)\d u \d v + \frac{1}{T^2}
\biggl(\int_0^T f(v) \d v\biggr)^2.
\end{align*}

Consider the following stochastic process.

\begin{definition}
A stochastic process $X = \{X(t), t \in\T\}$ is called the
generalized Ornstein--Uhlenbeck process from the space $L_2(\varOmega)$ if
$X$ is an $L_2(\varOmega)$-process with\vadjust{\eject} the covariance function
\[
R_X(t, s) = e^{-\tau|t-s|}, \quad\tau> 0.
\]
\end{definition}

Then from Theorem~\ref{theorem_3} we can state conditions for a
majorizing measure on $[0,T]$ for the process $X$.

\begin{thm}\label{theorem_4}
Let $X=\{X(t), t\in[0, T]\}$ be a centered separable
generalized\break Ornstein--Uhlenbeck stochastic process from the space
$L_2(\varOmega)$ satisfying Assumption~\ref{assumption_sigma}, and let a
function $f$ satisfy Assumption~\ref{assumption:f} with the function
$\delta(t)$, $t>0$, such that
\begin{align}
\int_0^T\int_0^T \bigl(\delta\bigl((u - v)^{\beta_1/2}\bigr)\bigr)^{2-2\alpha}
\d u \d v < \infty,
\end{align}
where $\alpha\in(2/\beta_2, 1/\beta_1 +1)$ with $\beta_1,\beta_2\in
(0,1)$ such that $2/\beta_2 < 1/\beta_1 +1$.
Then the Lebesgue measure is majorizing on $[0, T]$ for the process
$X$, and, for any $0<p<1$ and $x>0$, we have the inequality
\begin{equation}\label{eq:theorem_4:3}
\P\Bigl\{\sup_{t\in[0, T]} \big|X(t) - f(t)\big| > x \Bigr\} \le x^{-2}
\Bigl(\varGamma_2^{\frac{1}{3}} +
\inf_{\alpha\in(0,1)}(D_{p,2}^2 \varDelta_2)^{\frac{1}{3}} \Bigr)^{3},
\end{equation}
where
\begin{align*}
\varGamma_2 &= \frac{2(T \tau+ e^{-\tau T} - 1)}{\tau^2 T^2} + \frac
{1}{T^2}\left(\int_0^T f(v) \d v\right)^2,\\
\varDelta_2 &= \int_0^T\int_0^T \bigl(2\bigl(\tau|u-v|\bigr)^{\beta_1} + \bigl(\delta
\bigl(\bigl(2\tau(u - v)\bigr)^{\beta_1/2}\bigr)\bigr)^2\bigr)^{1-\alpha} \d u \d v,\\
D_{p,2}&=
\frac{1}{p(1-p)}\sup_{t\in[0,T]}\biggl(
\frac{2\tau' (\tau'\min\{t, T-t\})^{\alpha\beta_2/2-1}}{1-2/\alpha}
\\
&\quad
+\frac{p2^{3\alpha/2}\left(\tau\max\{t, T-t\}\right)^{\alpha\beta_2/2}
- (\tau'\min\{t, T-t\})^{\alpha\beta_2/2}}{T}\biggr),
\end{align*}
where $\tau' = \tau2^{3/\beta_2}$.
\end{thm}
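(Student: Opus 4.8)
The plan is to read off inequality \eqref{eq:theorem_4:3} from Theorem~\ref{theorem_3} by specializing to $q=2$, $S=\T=[0,T]$, $\mu$ the Lebesgue measure, $\zeta(u)=u^\alpha$ (so $\gamma(u)=u^{1-\alpha}$), and the centered process with $R_X(t,s)=e^{-\tau|t-s|}$. Concretely, I would verify the two hypotheses of Theorem~\ref{theorem_3} and then evaluate the three ingredients $\Gamma_2$, $\Delta_2$, $D_{p,2}$; once these match the quantities displayed in the statement, \eqref{eq:theorem_4:3} is nothing but \eqref{eq:theorem_3:3} with $q=2$. The whole computation is driven by converting the covariance into power-type estimates for the increment seminorm and for $\sigma$.

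The term $\Gamma_2$ is routine. Writing $\Gamma_2=\E\bigl(\frac1T\int_0^T(X(u)-f(u))\d u\bigr)^2$, using $\E X\equiv 0$ and Fubini splits it into $\frac1{T^2}\int_0^T\int_0^T R_X(u,v)\d u\d v$ plus $\frac1{T^2}\bigl(\int_0^T f(v)\d v\bigr)^2$, and one elementary integration gives $\int_0^T\int_0^T e^{-\tau|u-v|}\d u\d v=2(\tau T+e^{-\tau T}-1)/\tau^2$, which is exactly the stated $\Gamma_2$.

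The analytic engine for $\Delta_2$ and $D_{p,2}$ is the elementary inequality $1-e^{-x}\le x^{\beta}$, valid for $x\ge 0$ and $0<\beta\le 1$. Since $X$ is centered, $d(u,v)^2=2(1-e^{-\tau|u-v|})$, and $L_2$-orthogonality of the deterministic increment yields the exact identity $d_f(u,v)^2=d(u,v)^2+(f(u)-f(v))^2$. Applying the inequality with $\beta_1$ gives $d(u,v)^2\le 2(\tau|u-v|)^{\beta_1}$ and $d(u,v)\le(2\tau|u-v|)^{\beta_1/2}$; inserting the latter into Assumption~\ref{assumption:f} and using that $\delta$ is increasing bounds the deterministic part, so that $d_f(u,v)^2\le 2(\tau|u-v|)^{\beta_1}+\bigl(\delta((2\tau(u-v))^{\beta_1/2})\bigr)^2$. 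Substituting this into $\Delta_2=\int_S\int_S(\gamma(d_f))^2\,\d(\mu\times\mu)$ produces the displayed $\Delta_2$; its convergence at the diagonal is governed by the leading power $|u-v|^{\beta_1(1-\alpha)}$, integrable precisely when $\alpha<1/\beta_1+1$, while the remaining $\delta$-contribution is controlled by the hypothesis $\int_0^T\int_0^T(\delta((u-v)^{\beta_1/2}))^{2-2\alpha}\d u\d v<\infty$. Applying the same inequality with $\beta_2$ lets me take $\sigma(h)=\sqrt2\,(\tau h)^{\beta_2/2}$ as the function of Assumption~\ref{assumption_sigma}, whence $\sigma^{(-1)}(y)=y^{2/\beta_2}/(\tau 2^{1/\beta_2})$ and the radius $R(u):=\sigma^{(-1)}(\frac12 u^{1/\alpha})=u^{2/(\alpha\beta_2)}/\tau'$ with $\tau'=\tau 2^{3/\beta_2}$ — this is the origin of $\tau'$.

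The genuine work is $D_{p,2}=\sup_{t}\frac1{p(1-p)}\int_0^{\zeta_1(t)p}(\nu_t(u))^{-1}\d u$. By \eqref{ex:nu_t(u)}, $\nu_t(u)=\bigl|[t-R(u),t+R(u)]\cap[0,T]\bigr|$, and I would split the integral at $u^{\ast}=(\tau'\min\{t,T-t\})^{\alpha\beta_2/2}$, the value where $R(u^{\ast})=\min\{t,T-t\}$. For $u\le u^{\ast}$ the ball sits inside $[0,T]$, so $\nu_t(u)=2R(u)$ and $\int_0^{u^{\ast}}\d u/(2R(u))$ is a multiple of $\tau'(\tau'\min\{t,T-t\})^{\alpha\beta_2/2-1}$; this integral converges at $u=0$ exactly because $2/(\alpha\beta_2)<1$, i.e.\ $\alpha>2/\beta_2$, which pins the lower endpoint of the $\alpha$-range and produces the first summand of $D_{p,2}$. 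For $u^{\ast}<u\le\zeta_1(t)p$ the ball protrudes beyond $[0,T]$; here $\nu_t(u)$ is bounded below by a fixed multiple of $T$, so this part contributes $T^{-1}$ times the interval length $\zeta_1(t)p-u^{\ast}$, and since $\zeta_1(t)=\bigl(2\sigma(\max\{t,T-t\})\bigr)^\alpha\le 2^{3\alpha/2}(\tau\max\{t,T-t\})^{\alpha\beta_2/2}$ this reproduces the second summand. Taking the supremum over $t\in[0,T]$ gives the stated $D_{p,2}$. The main obstacle is precisely this piecewise evaluation of $\nu_t$: one must track how $[t-R,t+R]\cap[0,T]$ evolves as $R$ grows, keep the singular integrand integrable at $0$ (which forces $\alpha>2/\beta_2$), and treat $t$ near $0$, $T/2$, and $T$ uniformly. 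Finally, the compatibility condition $2/\beta_2<1/\beta_1+1$ is exactly nonemptiness of the admissible range for $\alpha$, and inserting $\Gamma_2$, $\Delta_2$, $D_{p,2}$ into \eqref{eq:theorem_3:3} yields \eqref{eq:theorem_4:3}.
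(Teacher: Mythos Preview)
Your proposal is correct and follows essentially the same route as the paper: specialize Theorem~\ref{theorem_3} to $q=2$, $\zeta(u)=u^\alpha$, use $1-e^{-x}\le x^{\beta}$ with exponent $\beta_1$ to bound $d_f$ (giving $\varDelta_2$) and with exponent $\beta_2$ to choose $\sigma$ (giving $\sigma^{(-1)}$ and hence $\nu_t$), compute $\varGamma_2$ by direct integration of the covariance, and evaluate $D_{p,2}$ by a piecewise analysis of $\nu_t(u)$ according to how the ball $[t-R(u),t+R(u)]$ meets $[0,T]$. The only cosmetic difference is that the paper splits the $D_{p,2}$ integral into three ranges (ball inside, one-sided overhang, full interval) before collapsing the last two via the common lower bound, whereas you merge them from the outset; the resulting formulas and the constraints $\alpha>2/\beta_2$, $\alpha<1/\beta_1+1$ are identical.
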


\begin{proof}
Let us apply the inequality
\begin{equation}\label{exp_ineq}
1-\exp\{-x\} \le x^\beta,\quad0<\beta\le1,\; x \ge0.
\end{equation}
It is easy to see that, for all $0\le x < 1$, we have $1-\exp\{-x\} \le
x \le x^{\beta}$. Also, $1-\exp\{-x\} \le1 \le x^{\beta}$ for all
$x\ge1$.

Then, using (\ref{exp_ineq}), we have that
\begin{align*}
d(t,s) &= \big\|X(t) - X(s)\big\|_{L_2} = \bigl(\E\bigl(X(t) - X(s)\bigr)^2\bigr)^{1/2}\\
&= \bigl(\E X(t)^2 + \E X(s)^2 - 2R_X(t,s)\bigr)^{1/2} = \bigl(2-2\exp\big\{{-}\tau
|t-s| \big\}\bigr)^{1/2}\\
&\le2^{1/2} \bigl(\tau|t-s|\bigr)^{\beta/2},
\end{align*}
that is, the function $\sigma(h) = 2^{1/2}(\tau h)^{\beta/2}\ge\sup
_{|t-s|\le h} d(t,s)$, $h > 0$, satisfies Assumption~\ref{assumption_sigma}.
Then\vadjust{\eject}
\begin{equation}
\sigma^{(-1)}(h) = \frac{h^{2/\beta}}{2^{1/\beta}\tau},\quad h>0.
\label{ex:sigma_1}
\end{equation}

Also, it is easy to see that, for the centered process $X$,
\[
d_f(t,s) = \bigl(d^2(t,s) + \bigl(f(t) - f(s)\bigr)^2\bigr)^{1/2} \le\bigl(2\bigl(\tau
|t-s|\bigr)^{\beta_1} + \delta^2\bigl(d(t,s)\bigr)\bigr)^{1/2}
\]
for any $\beta_1\in(0,1]$ and
\begin{align}
\int_0^T\int_0^T R_X(s, t)\d s \d t& =
\int_0^T\int_0^t e^{-\tau(t-s)}\d s \d t + \int_0^T\int_t^T e^{-\tau
(s-t)} \d s \d t \nonumber\\
&= \frac{1}{\tau}\int_0^T\bigl( 1- e^{-\tau t} - e^{-\tau(T-t)} + 1 \bigr) \d t
\,{=}\, \frac{2(T \tau\,{+}\, e^{-\tau T} \,{-}\, 1)}{\tau^2}.
\end{align}

From (\ref{eq:theorem_3:Dpq}) it follows that
\[
\varDelta_2 = \int_0^T\int_0^T \bigl(2\bigl(\tau|t-s|\bigr)^{\beta_1} + \bigl(\delta
\bigl(\bigl(2\tau(u - v)\bigr)^{1/2}\bigr)\bigr)^2\bigr)^{1-\alpha} \d u \d v < \infty
\]
if $\beta_1(1-\alpha)+1 > 0$, that is, if $\alpha< 1/\beta_1 +1$. Then
\[
\int_0^T\int_0^T \bigl(\bigl(\delta(u - v)\bigr)^{\beta_1/2}\bigr)^{2-2\alpha}
\d u \d v < \infty.
\]

Applying (\ref{ex:sigma_1}) to (\ref{ex:nu_t(u)}) for some $\beta_2\in
(0,1]$, we have that
\[
\nu_t(u) =
\min\biggl\{T, t+\frac{u^{\frac{2}{\alpha\beta_2}}}{\tau2^{3/\beta
_2}}\biggr\} -
\max\biggl\{0, t-\frac{u^{\frac{2}{\alpha\beta_2}}}{\tau2^{3/\beta
_2}}\biggr\}.
\]
Put $\tau' = \tau2^{3/\beta_2}$.
It is easy to see that $\nu_t(u) = T$ if $T < t+\frac{u^{\frac{2}{\alpha
\beta_2}}}{\tau'}$
and $0 > t-\frac{u^{\frac{2}{\alpha\beta_2}}}{\tau'}$,
that is, if $u > (\tau'\max\{t, T-t\})^{\alpha\beta_2/2}$;
$\nu_t(u) = t + \frac{u^{\frac{2}{\alpha\beta_2}}}{\tau'} - (t-\frac
{u^{\frac{2}{\alpha\beta_2}}}{\tau'}) =
\frac{u^{\frac{2}{\alpha\beta_2}}}{2\tau'}$ if $u \le(\tau'\min\{t,
T-t\})^{\alpha\beta_2/2}$;
and $\nu_t(u) = \max\{t, T-t\} + \frac{u^{\frac{2}{\alpha\beta_2}}}{\tau
'}$ if
$(\tau'\min\{t, T-t\})^{\alpha\beta_2/2} \le u < (\tau' \max\{t, T-t\}
)^{\alpha\beta_2/2}$.

Consider
\[
D_{p,2}=\sup_{t\in[0,T]}\frac{1}{p(1-p)}\int_0^{p2^{3\alpha/2}\left
(\tau\max\{t, T-t\}\right)^{\alpha\beta_2/2}}
\frac{1}{\nu_t(u)} \d u.
\]
For $\alpha> 2 / \beta_2$, we have
\begin{align}
&\int_0^{p2^{3\alpha/2}\left(\tau\max\{t, T-t\}\right)^{\alpha\beta_2/2}}
\frac{\d u}{\nu_t(u)}\notag\\
&\quad= \int_0^{(4\tau\min\{t, T-t\})^{\alpha/2}} 2\tau' u^{-2/(\alpha\beta
_2)} \d u\nonumber\\
&\qquad+ \int_{(\tau'\min\{t, T-t\})^{\alpha\beta_2/2}}^{(\tau'\max\{t, T-t\}
)^{\alpha\beta_2/2}} \frac{\d u}{\max\{t, T-t\}
+ \frac{u^{2/(\alpha\beta_2)}}{\tau'}} \nonumber\\
&\qquad+\int_{(\tau'\max\{t, T-t\})^{\alpha\beta_2/2}}^{p2^{3\alpha/2}\left
(\tau\max\{t, T-t\}\right)^{\alpha\beta_2/2}} \frac{1}{T} \d u
\nonumber\\
&\quad\le\frac{2\tau'}{1-2/(\alpha\beta_2)} \bigl(\tau'\min\{t, T-t\}\bigr)^{\alpha
\beta_2/2-1} \nonumber\\
&\qquad+\frac{(\tau'\max\{t, T-t\})^{\alpha\beta_2/2} - (\tau'\min\{t, T-t\}
)^{\alpha\beta_2/2}}{\max\{t, T-t\} + \min\{t, T-t\}} \nonumber\\
&\qquad+\frac{p2^{3\alpha/2}\left(\tau\max\{t, T-t\}\right)^{\alpha\beta_2/2}
- (\tau'\max\{t, T-t\})^{\alpha\beta_2/2}}{T}\nonumber\\
&\quad= \frac{2\tau' (\tau'\min\{t, T-t\})^{\alpha\beta_2/2-1}}{1-2/\alpha}
\nonumber\\
&\qquad+\frac{p2^{3\alpha/2}\left(\tau\max\{t, T-t\}\right)^{\alpha\beta_2/2}
- (\tau'\min\{t, T-t\})^{\alpha\beta_2/2}}{T}.\\[-28pt]\notag
\end{align}
\end{proof}

\section*{Acknowledgments}
The author gratefully thanks Prof. Dr. Yu. Kozachenko, who provided
insight and expertise for the research.

\end{document}